\theoremstyle{plain}
\newtheorem{theorem}{Theorem}
\theoremstyle{definition}
\theoremstyle{remark}
\newtheorem{assumption}{Assumption}
\newtheoremstyle{cited}%
  {3pt}
  {3pt}
  {\itshape}
  {}
  {\bfseries}
  {.}
  {.5em}
  {\thmname{#1} \thmnumber{#2} \thmnote{\normalfont#3}}
\theoremstyle{cited}
\DeclareMathOperator{\tr}{tr}
\newcommand{\iu}{{i\mkern1mu}}
\newcommand{\bbm}{\begin{bmatrix}}
\newcommand{\ebm}{\end{bmatrix}}
\newcommand{\bbR}{\mathbb{R}}
\newcommand{\grad}{\nabla}
\newcommand{\I}{\mathcal{I}}
\newcommand{\J}{\mathcal{J}}
\newcommand\figref[1]{Figure\ \ref{#1}}
\newcommand\secref[1]{Section\ \ref{#1}}
\title{Convex Relaxation for Fokker-Planck}
\author{Yian Chen}
\address{Department of Statistics, University of Chicago}
\author{Yuehaw Khoo}
\address{Department of Statistics, University of Chicago}
\author{Lek-Heng Lim}
\address{Department of Statistics, University of Chicago}
\email{yianc@uchicago.edu, ykhoo@uchicago.edu,lekheng@uchicago.edu}
\begin{document}

\begin{abstract}
    We propose an approach to directly estimate the moments or marginals for a high-dimensional equilibrium distribution in statistical mechanics, via solving the high-dimensional Fokker-Planck equation in terms of low-order cluster moments or marginals. With this approach, we bypass the exponential complexity of estimating the full high-dimensional distribution and directly solve the simplified partial differential equations for low-order moments/marginals. Moreover, the proposed moment/marginal relaxation is fully convex and can be solved via off-the-shelf  solvers. We further propose a time-dependent version of the convex programs to study non-equilibrium dynamics. We show the proposed method can recover the meanfield approximation of an equilibrium density.  Numerical results are provided to demonstrate the performance of the proposed algorithm for high-dimensional systems.
\end{abstract}

\maketitle
\section{Introduction}

A central question in statistical mechanics is the determination of the equilibrium distribution \cite{pathria2016statistical} from a stochastic differential equation (SDE). 
Standard approaches include applying Monte-Carlo method to simulate an SDE or variational inference method that minimizes an entropic regularized energy functional \cites{blei2017variational,wu2019solving}. While these approaches have various successes, a less explored route to study the equilibrium is via solving the partial differential equation (PDE) associated with the SDE, which characterizes the evolution of the distribution. Obtaining the equilibrium distribution can be done by looking at the time-independent version of such a PDE.  However, for a $d$-dimensional SDE, one has to solve a $d$-dimensional PDE which results in the curse-of-dimensionality. While one can solve the resulting PDE via assuming various low-complexity ansatz such as a neural-network or tensor-network for the solution, obtaining the solution requires optimizing a non-convex objective function over high-dimensional spaces \cites{han2018solving,yu2018deep,chen2023committor,chertkov2021solution}. In this note we take a different approach to it via using a convex-relaxation strategy to obtain the moments of the distribution. While our method could potentially be used to study the transient behaviour of general SDEs, we demonstrate the success of our method for the case of overdamped Langevin dynamics \cite{schlick2010molecular}, where we solve for the equilibrium distribution via solving the Fokker-Planck equation (FPE).

\subsection{Fokker-Planck equation}
In the rest of the paper, we assume we are given a confining potential function (see, e.g., \cite[Definition 4.2]{bhattacharya2009stochastic}) $V:\mathbb{R}^d\rightarrow \mathbb{R}$ that has a pairwise structure, i.e.
\begin{equation}
    V(x) = \sum_{i=1}^d V_i(x_i) + \sum^d_{i\neq j=1} V_{ij}(x_i,x_j).
    \label{eq:pairwise_potential}
\end{equation}
The pairwise potential function is ubiquitous in a wide range of physical, chemical systems and molecular dynamics simulations \cites{halliday2013fundamentals,rohrlich1989paradox,morse1929diatomic,lennard1924determination}.

The goal is to study the equilibrium distribution $\rho^\star(x) \propto \exp(-V(x)/T)$ of the following overdamped Langevin processes,
\begin{align}
    d x_t = -\nabla V(x_t) \, dt + \sqrt{2\beta^{-1}} \, dW_t,
    \label{eq:langevin}
\end{align}
where $x_t \in \Omega\subset\bbR^d$ is the state of stochastic system, $\beta = 1/T$ where $T$ is the temperature, and $W_t$ is a $d$-dimensional Wiener process. 

Standard approaches (for example in molecular dynamics) use a long time Markov-chain Monte-Carlo (MCMC) simulation to study the equilibrium distribution \cite{kikuchi1991metropolis}. For $V$ with a complicated landscape and high-dimensionality, the mixing-time of an MCMC can be long.

Deterministic approaches include the solution to the following variational problem
\begin{equation}\label{eq:VI}
    \min_{\rho\geq 0, \int \rho(x) dx = 1} \langle \rho, V\rangle + \frac{1}{\beta} \langle \rho, \log \rho \rangle
\end{equation}
where the minimizer is in fact the equilibrium distribution $\rho^\star$. Since $\rho:\mathbb{R}^d\rightarrow \mathbb{R}$ is a high-dimensional function, various low-complexity ansatz such as meanfield \cite{blei2017variational}, bethe approximation \cite{katsura1974bethe}, generative models \cite{kobyzev2020normalizing}, and tensor-networks \cite{dolgov2020approximation} has been used to represent $\rho$. These approaches either leads to a non-convex optimization domain, or approximations to the entropy term \cite{risteski2016calculate}. 

On the other hand, the equilibrium distribution satisfies the $d$-dimensional stationary FPE:
\begin{equation}\label{eq:FPE}
    L\rho(x) = -\frac{1}{\beta} \Delta \rho(x)- \grad \cdot (\rho(x)  \grad V(x)) = 0.
\end{equation}
There are many approaches available to solve the FPE. The traditional finite difference and finite element methods work well for low-dimensional problems but they both scale exponentially with the number of dimensions. To circumvent the curse of dimensionality, researchers propose to pose various low-complexity ansatz on the solution of FPE to control the growth of parameters. For example, \cites{han2018solving,yu2018deep,zhai2022deep} propose to parametrize the unknown PDE solution with deep neural networks and optimize their underlying stochastic differential equations or variational problems instead. \cites{ambartsumyan2020hierarchical,chen2021scalable,chen2023scalable,ho2016hierarchical} approximates the differential operators with data-sparse hierarchical matrices. \cites{kazeev2014direct} parametrizes the PDE solution using tensor networks \cites{orus2014practical,oseledets2011tensor, peng2023generative} where a similar approach \cite{chen2023committor} is proposed for the adjoint equation. These parametric models effectively controls the complexity of the problem but again, obtaining them requires the use of non-convex optimization.

\subsection{Our contributions}

In this paper, we propose a novel approach to obtain the moments of the equilibrium distribution, via solving for the FPE in terms of the low order marginals or cluster moments, which circumvent the curse-of-dimensionality. The optimization problems we proposed are fully convex, hence avoiding the difficulty of optimizing a globally nonconvex objective in neural network and tensor network approaches. While this has similarities to the convex hierarchies proposed to solve \eqref{eq:VI}, due to the linearity of the FPE, we completely bypass the need to approximate the entropy term in terms of the moments, which often leads to a non-convex cost (as in the case of Bethe approximation). We show theoretically that a meanfield approximation can be recovered when being used to obtain the 1-marginals of the equilibrium density. Hence our approach can be regarded as a convex-relaxation approach for obtaining a meanfield solution to the meanfield FPE (or McKean-Vlasov type equations) which is nonlinear \cite{funaki1984certain}. 

We view this note as extending methods described in \cite{lasserre2009moments} for solving 1D PDE to high-dimensional cases. We show via analysis and numerical experiments that choices convex-hierarchies (cluster moment and marginal relaxations) in similar spirit with cluster expansion \cite{brydges1976cluster} or marginal relaxation \cites{sontag2007new,peng2012approximate,chen2020multiscale} can be used to handle high-dimensional systems that are weakly correlated. We note that this work is substantially different from \cite{raj2023efficient} where the non-negative density is parameterized as a sum-of-squares polynomial. For us, only the moments of the distribution is solved for, which allows us to efficiently characterize a degenerate distribution. 

\subsection{Organization}
The rest of the note is organized as follows.  In Section~\ref{sec:pde_hierarchy}, we propose various convex hierarchies to solve \eqref{eq:FPE}. 
In Section~\ref{sec:time_dependent}, we extend the proposed method to time-dependent problem. In Section~\ref{sec:analysis}, we analyze the recovery property of recovering the meanfield approximation to an equilibrium density from the proposed convex hierarchies. In Section~\ref{sec:numerical}, we study the proposed method in a few numerical experiments. We then conclude in Section~\ref{sec:conclusion}.


\section{PDE hierarchies for solving the FPE}
\label{sec:pde_hierarchy}

In this section, we present two formulations: cluster moments relaxation (\secref{sec:moment_relaxation}) and marginal relaxation (\secref{sec:marginal_relaxation}) for solving the FPE. 

\subsection{Cluster moment relaxation}
\label{sec:moment_relaxation}

Let $\Omega\in \mathbb{R}$ be the computational domain for a single variable. Let $\mathcal{T}:=\{t\ \vert \ t:\Omega^d\rightarrow \mathbb{C}\}$ be a finite dimensional function space. Furthermore, let $\mathcal{Q}:=\{q\ \vert \ q:\Omega^d\rightarrow \mathbb{C}, q\geq 0\}$ be a finite dimensional space of non-negative functions.

Suppose we want to solve 
\begin{equation} 
L\rho = 0,\quad \rho\geq 0,\quad \int \rho(x) dx = 1.
\end{equation}
While this is a linear PDE in $\rho$, it requires exponential number of basis in $d$ to discretize $\rho$. To overcome the dimensionaliy of the problem, we solve a relaxed version of the PDE problem via enforcing necessary conditions for $L\rho = 0$ and $\rho\geq 0$ via test functions $t\in \mathcal{T}$ and $q\in \mathcal{Q}$. More specifically, we let:
\begin{eqnarray}\label{eq:adjoint problem}
&\ &\langle L^*t, \rho\rangle = \langle t, L\rho\rangle = 0, \quad \forall t \in \mathcal{T}, \cr
&\ &\langle q, \rho \rangle  \geq 0,\quad \forall q \in \mathcal{Q}, \cr 
&\ &\int \rho(x) dx = 1.
\end{eqnarray}
where the adjoint operator $L^*$ apply to $t\in \mathcal{T}$ is
\begin{eqnarray}
L^* t(x) &=& -\frac{1}{\beta}\Delta t(x) + \nabla t(x) \cdot \nabla V(x) \cr 
\end{eqnarray}
The adjoint operator is derived using the fact that we impose the boundary condition
\begin{equation}
    \rho(x),\ \grad \rho(x) = 0,\quad x\in \partial \Omega.
    \label{eq:boundary}
\end{equation}
This is a reasonable assumption especially when $V$ is confining. We now discuss choices of space for $\mathcal{T}$ and  $\mathcal{Q}$ that lead to tractable convex program. 

\subsubsection{Cluster basis}

Let $\{\phi_j:\Omega\subset \mathbb{R}\rightarrow \mathbb{C} \}_{j=1}^n$ be a set of single variable basis. Let $\mathcal{F}_K$ be the space of one-body to $K$-body functions where  $\mathcal{F}_K = \text{span}\{\phi_{j_1}(x_{i_1})\cdots \phi_{j_K}(x_{i_K})\ \vert \ \J:=(j_1,\ldots j_K)\in [n]^K, \I:=(i_1\ldots,i_K)\in  {d \choose K}\}$. For our purpose, we choose $\{\phi_j\}_{j=1}^n$ to be the monomial or Fourier basis and $\phi_1 \propto 1$. In what follows, we often use the notations $x_{\I} := (x_{i_1},\cdots,x_{i_K})$ and  $\phi_{\J}(x_\I):=\phi_{j_1}(x_{i_1})\cdots \phi_{j_K}(x_{i_K})$. We also let the set of sum-of-squares functions be $\mathcal{S}_K = \{ [\phi_\J(x_\I)]_{\I\J} A  [\phi_\J(x_\I)]_{\I\J}^T \ \vert\ A\succeq 0, \phi_\J \in  \mathcal{F}_K\}$ where the positive-semidefinite matrix $A$ has size ${d \choose K}n^K\times {d \choose K}n^K$.

We choose $\mathcal{T} = \mathcal{F}_K$ and furthermore $\mathcal{Q} = \mathcal{S}_K$.
In this case, applying the adjoint operator $L^*$ to $t\in \mathcal{T}$ gives
\begin{eqnarray}
(L^* t)(x_{\I}) &=& -\frac{1}{\beta}\Delta t(x_{\I}) + \nabla t(x_{\I}) \cdot \nabla V(x) \cr 
&=& \sum^K_{k=1} -\frac{1}{\beta}\frac{\partial^2 t(x_{\I}) }{\partial x_{i_k}^2} + \sum^K_{k=1} \frac{\partial t(x_{\I}) }{\partial x_{i_k}} \left(\frac{\partial V_{i_k}(x_{i_k})}{\partial x_{i_k}} + \sum_{i_l:i_l\neq i_k} \frac{\partial V_{i_k,i_l}(x_{i_k},x_{i_l})}{\partial x_{i_k}}\right),\quad \I\in {d \choose K} .
\end{eqnarray}

Suppose $V_{k,l}(x_k,x_l)$ can be expanded by $\{\phi_{i_k}(x_k)\phi_{i_k}(x_l)\}^{n}_{i_k,i_l=1}$ up to sufficient numerical precision. Furthermore, we pick $t=\phi_{\J}(x_\I)$, $\J\in [n]^K,\ \I \in {d \choose K}$ where $K\geq 1$. In this way, \eqref{eq:adjoint problem} can be written in terms of the following matrix variable
\begin{equation}\label{eq:pair moment}
M_{\I\J,\I'\J'} = \int \phi_\J(x_\I)\phi_{\J'}(x_{\I'}) \rho(x) dx
\end{equation}
that scales as ${d \choose K}n^K\times {d \choose K}n^K$, since $\{\phi_\J(x_\I)\phi_{\J'}(x_{\I'})\}=\mathcal{F}_K\otimes \mathcal{F}_K$ and this includes
\begin{equation}
 -\frac{1}{\beta}\frac{\partial^2 \phi_\J(x_{\I}) }{\partial x_{i_k}^2}  , \quad  \frac{\partial \phi_\J(x_{\I}) }{\partial x_{i_k}} \frac{\partial V_{i_k}(x_{i_k})}{\partial x_{i_k}} ,  \quad \frac{\partial \phi_\J(x_{\I}) }{\partial x_{i_k}}  \sum_{i_l:i_l\neq i_k} \frac{\partial V_{i_k,i_l}(x_{i_k},x_{i_l})}{\partial x_{i_k}} ,
\end{equation}
 in $\mathcal{F}_K$, $\mathcal{F}_K\otimes \mathcal{F}_1$, $\mathcal{F}_K\otimes \mathcal{F}_2$ respectively. More precisely, the main optimization problem derived from \eqref{eq:adjoint problem} becomes
\begin{eqnarray}
&\ &l_{\I \J}(M) = 0,\quad \J\in [n]^K,\ \I \in {d \choose K}\cr 
&\ &M\succeq 0\cr
&\ &M_{\I=\emptyset \J=0,\I'=\emptyset \J'=0} = 1
\label{eq:moment_constraint}
\end{eqnarray}
where $l_{\I \J}(M)=0$ is the reformlutation of 
$$\int (L^* \phi_\J)(x_\I) \rho(x)dx = 0$$ in terms of the moment matrix $M$, and the positive semidefinite constraints $M \succeq 0$ comes from the fact that   $$\int [\phi_\J(x_\I)]_{\I\J} A  [\phi_\J(x_\I)]_{\I\J}^T\rho(x)dx \geq 0$$ for any $A\succeq 0$.

\subsubsection{Symmetric cluster basis}

For interacting particle systems, often one needs to deal with a potential $V$ that is symmetric, i.e. $V(x_1,\ldots,x_d)= V(x_{\sigma(1)},\ldots,x_{\sigma(d)})$ for any $\sigma$ in the symmetric group $\text{Sym}(d)$. In this case one can let $\mathcal{T}=\mathcal{F}_{S,K}$ where $\mathcal{F}_{S,K} = \{\frac{1}{d!} \sum_{\sigma\in \text{Sym}(d)} f(x_{\sigma(1)},\ldots,f_{\sigma(d)})\ \vert \ f\in \mathcal{F}_K\}$, which consists of symmetric functions in  $\mathcal{F}_{K}$. This allows us to span $\mathcal{F}_{S,K}$ with $n^K$ basis instead of ${n \choose K}n^K$ basis, hence achieving a significant complexity gain. Similarly, one can let the set of nonnegative functions $\mathcal{Q} = \mathcal{S}_{S,K}$ where $\mathcal{S}_{S,K} = \{ [\phi_\J(x)]_{\J} A  [\phi_\J(x)]_{\J}^T \ \vert\ A\succeq 0, \phi_\J \in  \mathcal{F}_{S,K}\}$, i.e. the symmetric sum-of-squares functions.

\subsection{Marginal relaxation}
\label{sec:marginal_relaxation}

Another possible convex relaxation to solve \eqref{eq:FPE} is via discretized low-order marginals. This can be useful, when $V$ exhibits singularities (for example when pairiwse interactions $V_{ij}$ is coulombic). Let $\rho_\I$ be the $\I$-th marginal of $\rho$:
\begin{align}
    & \rho_\I(x_\I) := \int \rho(x) dx_{[d]\setminus \I},
\end{align}
where $\I\in {d \choose K}$. A reduced order PDE in terms of the marginal can be derived by partial integrations of the FPE:
\begin{equation}
    \int \left[-\frac{1}{\beta} \Delta \rho(x)- \grad \cdot (\rho(x)  \grad V(x))\right] dx_{[d]\setminus \I} = 0
\end{equation}
which gives
\begin{equation}
    -\frac{1}{\beta} \sum_{k=1}^K \frac{\partial^2\rho_\I(x_\I)}{\partial x_{i_k}^2}-\sum_{k=1}^K \frac{\partial}{\partial x_{i_k}} \left(\sum_{j:j\in \I^c}\int  \rho_{\I\cup j}(x_\I,x_j)  \frac{\partial V_{i_kj}}{\partial x_{i_k}}(x_{i_k},x_j)dx_j\right) = 0.
    \label{eq:key}
\end{equation}
This gives equations for all $\rho_{\I}$ and $\{\rho_{\I\cup j}\}^d_{j=1}$. We further impose neccessary conditions that these marginals are derived from the same density $\rho$, i.e.
\begin{equation}\label{eq:local consistency}
\int \rho_{\I \cup j}(x_{\I \cup j}) dx_{j} = \rho_{\I}(x_{\I}),\  \rho_{\I\cup j}\geq 0,\ \forall  j\in [d], \ \I \cap j = \emptyset, \quad \int \rho_\I(x_\I)dx_\I = 1,\quad \forall \I \in {d \choose K}.
\end{equation}
Now to solve  \eqref{eq:key} along with \eqref{eq:local consistency}, we let $\tilde \rho_\I$ be a $n^K$ points discretization of $\rho_\I$ for each $\I$. Then $\frac{\partial^2\rho_\I(x_\I)}{\partial x_{i_k}^2}$ and $\frac{\partial \rho_{\I\cup j}}{\partial x_{i_k}}$ are discretized by a central and forward difference respectively in terms of $\tilde \rho_\I$ and $\tilde \rho_{ \I\cup j}$, and we get a discretized version of \eqref{eq:key}:
\begin{equation}\label{eq:discrete FPE marginal}
\tilde L_\I(\tilde \rho_\I, \{ \tilde \rho_{\I\cup j} \}_j) = 0.
\end{equation}
Along with a quadrature scheme for enforcing \eqref{eq:local consistency} one has a Sherali-Adams type linear programming hierarchy \cite{laurent2003comparison} with ${d \choose K+1}n^{K+1}$ number of variables ($K\geq 1$). For example, when discretizing the domain $\Omega^d$ as a uniform grid $X\subset \mathbb{R}^d$, one can use the quadrature scheme
\begin{equation}\label{eq:discretized marginal constraints}
\sum_{x_{j}\in X_{j}} \tilde \rho_{\I \cup j}(X_{\I},x_{j}) w_{x_{j}} = \tilde \rho_{\I}(X_{\I}),\  \tilde \rho_{\I\cup j}\geq 0,\ \forall  j\in [d],\ \I \cap j = \emptyset, \  \sum_{x_{\I}\in X_\I} \tilde \rho_\I(x_{\I})w_{x_{\I}}= 1,\  \forall \I \in {d \choose K}
\end{equation}
where $X_{\I}$ denotes the slice of grid points $X$ in $\I$-th dimension, and $w_{x_\I}$'s are the quadrature weights.  As in \cites{peng2012approximate,khoo2019convex,chen2020multiscale}, one can further  add semidefinite constraints to strengthen the convex-relaxation:
\begin{equation}
[G_{\I\I'}]_{\I\I'}\succeq 0\quad  \text{where}\ G_{\I\I'}=\tilde \rho_{\I\cup \I 
'}, \ G_{\I\I}-\text{diag}(G_{\I\I}) = 0,\ \text{diag}(G_{\I\I}) = \tilde \rho_\I
\label{eq:PSD_constraint}
\end{equation}
where $G$ has size ${d \choose K}n^K\times {d \choose K}n^K$. We remark that as in the case of cluster moments relaxation, symmetrization can also be done with marginals, leading to variable with size $n^K$, as in \cite{khoo2019convex}.

\subsection{Additional regularizations}
As we show later in Section~\ref{sec:analysis}, suppose the underlying equilibrium density $\rho^\star$ admits a meanfield or block meanfield type solution, i.e. $\rho(x) \approx \prod_{\mathcal{I}} \rho_\mathcal{I}(x_\mathcal{I})$ where $\{\I\}$ partitions $[d]$, the proposed convex programs can stably determine each $\rho_\mathcal{I}$. However, in generally one cannot hope to recover higher order marginals $\rho_{\I\cup j}$ or moments $M_{\I\J,\I'\J'}$ in \eqref{eq:pair moment} (moments of $\rho_{\I\cup \I'}$), since the total number of equations scales as ${d\choose K} = O(d^K)$, and the number of these higher order marginals or moments  scales as $O(d^{2K})$ or $O(d^{K+1})$ respectively. 

In order to reduce the number of variables, one can introduce penalties to regularize the solution such that it admits a meanfield type behavior. One can add the following cost to the convex programs in Section~\ref{sec:moment_relaxation} and \ref{sec:marginal_relaxation}: (1) Minimize the nuclear norm $\|M_{\I\J,\I'\J'}\|_*$ for each pair $(\I,\I')\in {d\choose K}\times {d\choose K}$ for moment relaxation. Minimize the nuclear norm $\|\tilde \rho_{\I\cap j}\|_*$ for each $(\I,j)\in {d\choose K}\times [d]$ for marginal relaxation, treating $\tilde \rho_{\I\cap j}$ as a $n^K\times n$ matrix. (2) Maximize the entropy of each $\tilde \rho_{\I\cap j}$ for each  $(\I,j)\in {d\choose K}\times [d]$ for marginal relaxation. For $\rho_{\I\cap j}$ with marginals being $\rho_{\I}$ and $\rho_{j}$, the maximum entropy solution is $\rho_{\I\cap j} = \rho_{\I}\rho_{j}$. In a similar vein, interior point methods \cite{potra2000interior} for convex programming always have a log barrier function, which serves as an entropic-like regularization for  $\tilde \rho_{\I\cap j}$.




\section{Time-dependent problems}
\label{sec:time_dependent}

In this section, we discuss possible extensions of our framework to study non-equlibrium dynamics with time-dependent Fokker-Planck equation
\begin{equation}\label{eq:time dep Fokker}
\frac{\partial \rho(x,\tau)}{\partial \tau} = -L\rho(x,\tau), \quad (x,\tau) \in \Omega^d \times [0,T]
\end{equation}
subject to initial on $\rho(x,0)$ and extra boundary conditions on $\rho(x,\tau)$. For example when studying first passage time \cites{wille2004new, artime2018first}, one would have the boundary conditions
\begin{align}\label{eq:time dependent bc}
    & \rho(x,0) = \delta(x-a),\cr
    & \rho(x,\tau) = 0,\quad x\in A\subset \Omega^d, \tau \in (0,T].
\end{align}
where particle starts at $\tau=0$ at $a\in \mathbb{R}^d$ and gets absorbed when hitting region $A$. We let $A$ be a semialgebraic set defined by
\begin{equation}
h(x)\leq 0,
\end{equation}
for example a ball or a half-space
\begin{equation}
h(x)=\vert x-b\vert^2-r^2\leq 0,\ b\in \mathbb{R}^d,\quad h(x) = c^T x\geq 0,\ c\in \mathbb{R}^d.
\end{equation}
Then the support of $\rho$ can be characterized by $h(x) \geq 0$.

\subsection{Time dependent cluster moment relaxation}
\label{sec:time_dependent_moment}

To apply convex relaxation to such a problem, we perform a direct discretization in time by approximating $\rho(x,t)$ as $\rho^1(x):=\rho(x,t_1),\ldots,\rho^m(x):=\rho(x,t_m)$, where $t_l = \frac{(l-1)T}{m-1}$. We again have an equation 
\begin{equation}
    \frac{\rho^{l+1}(x) - \rho^{l}(x)}{\delta \tau} + L\rho^l (x) = 0, \quad l=1,\ldots,m-1,
\label{eq:evolution_equation}
\end{equation}
where its convex relaxation can be obtained via testing against $t\in \mathcal{T}$, which gives 
\begin{eqnarray}\label{eq:adjoint problem time}
&\ & \left \langle t, \frac{\rho^{l+1} - \rho^{l}}{\delta \tau} + L\rho^l \right\rangle=0, \quad \forall t \in \mathcal{T},\ \forall l=1,\ldots, m-1\cr
&\ &\langle q, \rho^l\rangle  \geq 0,\quad \forall q \in \mathcal{Q}, \ \forall l=1,\ldots, m\cr 
&\ &\langle s h,\rho^l \rangle\geq 0, \quad \forall s\in \mathcal{Q},\ \forall l=1,\ldots, m\cr 
&\ & \langle w,\rho^1 \rangle =  \langle w,\delta(\cdot-a)\rangle,\quad \forall w\in \mathcal{T}, \ \forall l=1,\ldots, m\cr
&\ &\int \rho^1(x) dx = 1.
\end{eqnarray}

\subsection{Time dependent marginal relaxation}
\label{sec:time_dependent_marginal}

In this section, we present a different way to solve the time-dependent problem via marginal relaxations. We denote the $\I$-th marginal at time $l$ by
\begin{align}
   & \rho^l_\I(x_\I) := \int \rho(x, t_l) dx_{[d]\setminus \I}
\end{align}
and its disretization on the $\I$-th slice $X_\I\subset \Omega^{\vert \I\vert}$ of $X\subset \Omega^d$ as $\tilde \rho^l_\I$. We again look at the time discretized equation \eqref{eq:evolution_equation}, however, we integrate the equation in \eqref{eq:evolution_equation} with $\int\ \cdot\ dx_{[d]\setminus \I}$ and solve for $\rho^l_\I$ in terms of the discretized $\tilde \rho^l_\I$, while imposing constraints such as \eqref{eq:discretized marginal constraints} on $\tilde \rho^l_\I$ for each time $l$. As opposed to the discretized stationary equation \eqref{eq:discrete FPE marginal}, we have
\begin{align}
   &\frac{\tilde \rho^{l+1}_{\I} - \tilde \rho^{l}_{\I}}{\delta \tau} = -\tilde L_\I(\tilde \rho_\I, \{ \tilde \rho_{\I\cup j} \}_j) ,\ \notag\\
   &\forall l=1,\ldots, m.\notag\\
   &\sum_{x_{j}\in X_{j}} \tilde \rho_{\I \cup j}(X_{\I},x_{j}) w_{x_{j}} = \tilde \rho_{\I}(X_{\I}),\  \forall  j\in [d],\ \I \cap j = \emptyset,\ \forall l=1,\ldots, m \notag\\
   &\tilde \rho^l_\I\geq 0,\ \sum_{x_{\I}\in X_\I} \tilde \rho_\I(x_{\I})w_{x_{\I}}= 1,\  \forall \I \in {d \choose K},\ \forall l=1,\ldots, m,    \cr
   &[G^l_{\I\I'}]_{\I\I'}\succeq 0\   \text{where}\ G^l_{\I\I'}=\tilde \rho_{\I\cup \I '}, \ G^l_{\I\I}-\text{diag}(G^l_{\I\I}) = 0,\ \text{diag}(G^l_{\I\I}) = \tilde \rho_\I, \forall l=1,\ldots, m, \notag\\ \label{eq:key_evolving}
\end{align}
where $\tilde L_\I$ is defined as in \eqref{eq:discrete FPE marginal}. As for the initial and boundary conditions in \eqref{eq:time dependent bc}, we approximate them as
\begin{equation}
\tilde \rho^1_{\I}(X_\I) = f_0(X_\I),\quad  \tilde \rho_\I^l(X_\I \cap A_\I) = 0, \quad \forall l=2,\dots,m, \ \forall \I \in {d \choose K},
\end{equation}
where $f_0:\mathbb{R}^{\vert \I \vert} \rightarrow \mathbb{R}$ is a locally supported function that approximates the $\vert\I\vert$ dimensional delta function $\delta(\cdot-a_\I)$, and $a_\I$, $A_\I$ are the $\I$-th slice of the coordinate $a\in \Omega^d$ and set $A\subset \Omega^d$ respectively.

\section{Analysis}\label{sec:analysis}

In this section, we analyze the recovery property of the continuous marginal relaxation (Section~\ref{sec:marginal_relaxation}). We show that the proposed convex program can determine a \emph{meanfield} approximation to the equilibrium density $\rho^\star$. This is in stark contrast with determining the meanfield approximation from a nonlinear Mckean-Vlasov type equations \cite{funaki1984certain,gartner1988mckean}, since the equation we are solving is completely linear in terms of  variables on a convex domain. We believe the proof technique is generalizable to moment based relaxation (Section~\ref{sec:moment_relaxation}) and discretized marginal relaxation \eqref{eq:discretized marginal constraints} by suitably incorporating the error of approximating low-order marginals by moments or interpolations.

\subsection{Preliminaries and assumptions}
For simplicity, we assume that each variable is on a periodic domain $\Omega = [0,1]$. The potential $V\in C^\infty({\Omega^d})$ gives rise to the equilibrium density $\rho^\star\in C^\infty({\Omega^d})$. Since the goal is to determine the marginals of $\rho^\star\in C^\infty({\Omega^d})$, we further assume that for $i,i'\in[d]$, $\rho_i\in C^\infty(\Omega),\rho_{ii'}\in C^\infty(\Omega^2)$. To facilitate the discussion, we define $L_i:C^\infty(\Omega)\times C^\infty(\Omega^2)^{d-1}\rightarrow C^\infty(\Omega)$ as
\begin{equation}\label{eq:one row}
L_i(\rho_i,\{\rho_{ii'}\}_{i'\in[d]\setminus i}) := -\frac{1}{\beta}  \frac{\partial^2\rho_i(x_i)}{\partial x_{i}^2}- \frac{\partial}{\partial x_{i}} \left(\sum_{i'\in[d]\setminus i}\int  \rho_{i i'}(x_i,x_{i'})  \frac{\partial V_{ii'}}{\partial x_{i}}(x_{i},x_{i'})dx_{i'}\right),\ i\in[d],
\end{equation}
$T_i: C^\infty(\Omega^2)^{d-1}\rightarrow C^\infty(\Omega)$ as
\begin{equation}\label{eq:transport operator}
T_i(\{\rho_{ii'}\}_{i'\in[d]\setminus i}) := - \frac{\partial}{\partial x_{i}} \left(\sum_{i'\in[d]\setminus i}\int  \rho_{i i'}(x_i,x_{i'})  \frac{\partial V_{ii'}}{\partial x_{i}}(x_{i},x_{i'})dx_{i'}\right),\ i\in[d],
\end{equation}
and $F_i: C^\infty(\Omega^2)^{d-1}\rightarrow C^\infty(\Omega)$ as
\begin{equation}\label{eq:force operator}
F_i(\{\rho_{ii'}\}_{i'\in[d]\setminus i}) :=  -\left(\sum_{i'\in[d]\setminus i}\int  \rho_{ii'}(x_i,x_{i'})\frac{\partial V_{ii'}}{\partial x_{i}}(x_{i},x_{i'})dx_{i'}\right),\ i\in[d].
\end{equation}
One can think of $F_i$ as some ``effective force'' exerted on the $i$-th dimension.

We now make a few assumptions needed for the proof. For the Fokker-Planck operator $L$, it can be shown that for some function $s$,
\begin{equation}\label{eq:FK identity 1}
L(\rho^\star s) = \rho^\star L s
\end{equation}
and furthermore 
\begin{equation}\label{eq:FK identity 2}
\langle t, \rho^\star L s\rangle = D(t,s),\quad D(t,s) := \int \nabla t(x) \cdot  \nabla s(x) \rho^\star(x)dx.
\end{equation}
(See for example \cite{pavliotis2016stochastic}.) We now make assumptions on the bilinear form $D$ (also called the Dirichlet's form).
\begin{assumption}\label{assump:coercivity}
The bilinear form $D$ satisfies
\begin{equation}
\lambda_1 \|s\|^2_{L^2(\rho^\star)} \leq D(s,s)=:\|s\|_D^2
\end{equation}
for any $s$ that is not a constant $\rho^\star$-almost everywhere.
\end{assumption}
This assumption on the lower-bound of the bilinear form is a standard ``spectral gap'' assumption  for showing the convergence of a time-dependent Fokker-Planck equation \cite{markowich2000trend}. The second assumption concerns the existence of a meanfield approximation.
\begin{assumption}\label{assump:meanfield1}
We assume that the probability density $\rho^\star\propto \exp(-\beta V)$ can be approximated by a meanfield, i.e.
\begin{equation}
 \frac{\prod_{i=1}^d \rho^\star_i(x_i)}{\rho^\star(x)} =    1+\epsilon(x) ,\quad \|\epsilon\|_{H^1(\rho^\star)}\leq \eta_1.
\end{equation}
\end{assumption}
This assumption is rather natural, since we first need the equilibrium density to be well approximated by a meanfield, before trying to determine the components of the meanfield  computationally.

\subsection{Main theorem}
The goal is to show the solution of the following convex problem
\begin{eqnarray}\label{eq:problem for proof}
 &\ &L_i(\rho_i,\{\rho_{ii'}\}_{i'\in[d]\setminus i})  = 0,\quad i\in [d]\cr 
&\ &\int\rho_{ii'}(x_i,x_{i'})dx_{i'} = \rho_i(x_i),\ \rho_{ii'}\geq 0, \ i'\in[d]\setminus i,\quad \int \rho_i(x_i)dx_i=1,\ \forall i\in[d],\cr 
 &\ &\sum_{i=1}^d\int s_i(x_i)^2\rho_i(x_i)dx_i - \sum^d_{i\neq i'=1}\int \rho_{ii'}(x_i,x_{i'}) s_i(x_i) s_{i'}(x_{i'})dx_idx_{i'}\geq 0,\quad \forall s_1,\cdots,s_d\in\mathcal{S}\cr 
 &\ &\frac{\rho_i}{\rho_i^\star},\frac{\rho_{ii'}}{\rho_{i}^\star \rho_{i'}^\star}\in \mathcal{S}, \forall i, i'\in[d], i\neq i'
\end{eqnarray}
recovers the meanfields  $\rho^\star_i,i\in[d]$. Here, the set $\mathcal{S}$ is defined to be
\begin{equation}\label{eq:regularity}
\mathcal{S}:=\{s\ \vert \left\| s\right\|_{L^2(\rho^\star)}\leq  \infty,\  \left\| \nabla s\right\|_{L^2(\rho^\star)}\leq R \|s\|_{L^2(\rho^\star)}, R>0\}.
\end{equation}
By an abuse of notation, in this section we treat a single variable function $s_i$ acting on the $i$-th dimensional variable $x_i$ as a $C^\infty(\Omega)$ and $C^\infty(\Omega^d)$ function interchangeably. In the latter case, we think of $s_i(x_i),x_i\in \Omega$ as $s_i(x) = s_i(x_i),x\in \Omega^d$. Although it seems like the third set of constraints has infinite number of constraints, it is equivalent to the positive semidefinite constraints in \eqref{eq:PSD_constraint} (with a suitable discretization using basis in $\mathcal{S}$), and therefore can be enforced with semidefinite programming.

In order to prove our main theorem, we make another assumption, specifically pertaining the convex relaxation \eqref{eq:problem for proof}. This assumption is relatively unusual in the literature of Fokker-Planck equation.
\begin{assumption}\label{assump:meanfield2}
We assume that $F_i$ defined \eqref{eq:force operator} satisfies
\begin{equation}
\sum_{i=1}^d \left\langle \frac{\partial s_i}{\partial x_i}, F_i(\{\rho_{ii'}-\rho_{i}\rho_{i'}\}_{i'\in[d]\setminus i}) \right\rangle \leq  \eta_2 \sqrt{\sum^d_{i=1}\left\|\frac{\rho_i}{\rho_i^\star} \right\|^2_{L^2(\rho^\star)}}  \sqrt{\sum^d_{i=1}\left\|s_i \right\|^2_{D}} ,\quad i\in [d]
\end{equation}
for any $\{\rho_{ii'}\}_{i,i'\in[d],i\neq i'}$ satisfying the constraints in \eqref{eq:problem for proof}.
\end{assumption}
This assumption essentially says the ``effective force'' computed from any 2-marginals $\{\rho_{ii'}\}_{i'\in [d]\setminus i}$ is not that different from treating each 2-marginal as independent, i.e. $\rho_{ii
} = \rho_i\rho_{i'}$. This depends on the nature of $V$, and also on the constraints in \eqref{eq:problem for proof}. In some sense, one can think of $\eta_2$ as ``relaxation error''. In Section~\ref{section:example}, we give a non-trivial example with strong pairwise interactions to show how  this assumption can hold without separability. 

We are now ready to present the main theorem.
\begin{theorem}\label{theorem:main}
Under Assumption~\ref{assump:coercivity},\ref{assump:meanfield1},\ref{assump:meanfield2}, the solution to the marginal relaxation problem \eqref{eq:problem for proof}
satisfies
\begin{equation}
  \sqrt{\sum_{i=1}^d \left\| \frac{\rho_i}{\rho_i^\star}-1\right\|^2_{L^2(\rho_i^\star)}}\leq \frac{ \sqrt{2}\eta_1 +\eta_2  }{\lambda_1(1-\sqrt{2}\eta_1)}\sqrt{\sum^d_{i=1}\left\|\frac{\rho_i}{\rho_i^\star} \right\|^2_{L^2(\rho_i^\star)}},\quad i\in[d].
\end{equation}
\end{theorem}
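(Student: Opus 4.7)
The plan is to set $s_i := \rho_i/\rho_i^\star - 1$ and $s(x) := \sum_i s_i(x_i)$, derive a master identity by testing the marginal equations against $s_i$, identify the Dirichlet form $D(s,s)$ as the dominant piece of the LHS, bound the force-decomposition pieces of the RHS using Assumptions~\ref{assump:meanfield1} and \ref{assump:meanfield2}, and finally invoke the coercivity of Assumption~\ref{assump:coercivity}. Note that the normalization constraint forces $\int s_i\rho_i^\star\,dx_i = 0$, that $s_i \in \mathcal{S}$ by the last line of \eqref{eq:problem for proof}, and that $D(s,s) = \sum_i D_i(s_i,s_i) =: \|s\|_D^2$ because the $s_i$ depend on disjoint coordinates.

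I would first test $L_i(\rho_i,\{\rho_{ii'}\}) = 0$ against $s_i$ and sum; doing the same with the true marginals $\rho_i^\star,\rho_{ii'}^\star$ (which satisfy $L_i(\rho_i^\star,\{\rho_{ii'}^\star\}) = 0$ by marginalizing $L\rho^\star = 0$ using the pairwise form \eqref{eq:pairwise_potential}) and subtracting yields
\[
\tfrac{1}{\beta}\sum_i \int \partial_{x_i}s_i \cdot \partial_{x_i}(s_i\rho_i^\star)\,dx_i = \sum_i \int \partial_{x_i}s_i\,F_i(\{\rho_{ii'} - \rho_{ii'}^\star\})\,dx_i.
\]
Expanding $\partial_{x_i}(s_i\rho_i^\star)$ via the product rule splits the LHS as $\tfrac{1}{\beta}D(s,s)$ plus a cross term; using the primitive of the true marginal equation $\tfrac{1}{\beta}\partial_{x_i}\rho_i^\star = F_i(\{\rho_{ii'}^\star\}) + c_i$ together with the vanishing of $\int\partial_{x_i}(s_i^2)\,dx_i$ under periodicity, I reduce this cross term to $\sum_i \int s_i\partial_{x_i} s_i\,F_i(\{\rho_{ii'}^\star\})\,dx_i$, to be handled alongside the middle piece of the RHS decomposition.

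Next I would decompose
\[
F_i(\{\rho_{ii'} - \rho_{ii'}^\star\}) = F_i(\{\rho_{ii'} - \rho_i\rho_{i'}\}) + F_i(\{\rho_i\rho_{i'} - \rho_i^\star\rho_{i'}^\star\}) + F_i(\{\rho_i^\star\rho_{i'}^\star - \rho_{ii'}^\star\}).
\]
Assumption~\ref{assump:meanfield2} controls the first piece, contributing at most $\eta_2 b\,\|s\|_D$ with $b := \sqrt{\sum_i\|\rho_i/\rho_i^\star\|_{L^2(\rho_i^\star)}^2}$. For the third piece, Assumption~\ref{assump:meanfield1} gives $\rho_i^\star\rho_{i'}^\star - \rho_{ii'}^\star = -\int\epsilon\rho^\star\,dx_{[d]\setminus\{i,i'\}}$, and collapsing the sum on $i'$ back into $\nabla V$ turns the contribution into $-\int\epsilon(x)\rho^\star(x)\,\nabla s\cdot\nabla V\,dx$. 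Two rounds of integration by parts (using $\rho^\star\nabla V = -\tfrac{1}{\beta}\nabla\rho^\star$) rewrite this as bilinear pieces in which the derivatives have been absorbed onto $\epsilon$ or $s$; Cauchy--Schwarz against $\|\epsilon\|_{H^1(\rho^\star)}\leq\eta_1$ then yields bounds of the form $\eta_1\|s\|_{L^2(\rho^\star)}\cdot(\text{structural constant of }V)$ (from the $\epsilon$-piece) and $\eta_1\|s\|_D$ (from the $\nabla\epsilon$-piece). The middle piece, expanded as $\rho_i\rho_{i'} - \rho_i^\star\rho_{i'}^\star = (s_i+s_{i'}+s_is_{i'})\rho_i^\star\rho_{i'}^\star$, combines with the retained cross term to yield additional $\eta_1$-bounded contributions of the same flavor.

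Assembling and absorbing the $\eta_1 D(s,s)$-type pieces back to the LHS, I expect an inequality of the shape $(1-\sqrt{2}\eta_1)\,D(s,s) \leq (\sqrt{2}\eta_1 + \eta_2)\,b\,\|s\|_{L^2(\rho^\star)}$. By Assumption~\ref{assump:coercivity}, $\lambda_1\|s\|_{L^2(\rho^\star)}^2 \leq D(s,s)$, and dividing gives $\lambda_1(1-\sqrt{2}\eta_1)\|s\|_{L^2(\rho^\star)} \leq (\sqrt{2}\eta_1+\eta_2)b$. Finally, the mean-zero property $\int s_i\rho_i^\star\,dx_i=0$ together with Assumption~\ref{assump:meanfield1} shows that the cross correlations $\int s_is_{i'}\rho_{ii'}^\star$ vanish up to $O(\eta_1)$, so $\|s\|_{L^2(\rho^\star)}^2 \geq a^2(1-O(\eta_1))$ with $a := \sqrt{\sum_i\|s_i\|_{L^2(\rho_i^\star)}^2}$; reabsorbing this small correction into the numerator produces the stated bound. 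The main obstacle lies in the third piece of the force decomposition: extracting a $\|s\|_{L^2(\rho^\star)}$ factor (rather than $\|s\|_D$) from $F_i(\{\rho_i^\star\rho_{i'}^\star-\rho_{ii'}^\star\})$ is what yields $\lambda_1$ (and not $\sqrt{\lambda_1}$) in the denominator, and this forces the integration by parts to be pushed one step further than is immediately natural while keeping careful track of the constants that produce exactly the factor $\sqrt{2}\eta_1$ and the denominator $1-\sqrt{2}\eta_1$.
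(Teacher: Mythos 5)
Your skeleton overlaps with the paper's at the endpoints (test against $s_i=\rho_i/\rho_i^\star-1$, isolate a Dirichlet form, invoke Assumption~\ref{assump:meanfield2} for the $\rho_{ii'}-\rho_i\rho_{i'}$ defect, finish with Assumption~\ref{assump:coercivity}), but your central decomposition is different from the paper's and it is precisely there that the argument breaks. The paper never subtracts the true marginal equations; it writes $L_i(\rho_i,\{\rho_{ii'}\})=L_i(\rho_i,\{\rho_i\rho_{i'}\})+T_i(\{\rho_{ii'}-\rho_i\rho_{i'}\})$, recognizes $\sum_i\langle s_i,L_i(\rho_i,\{\rho_i\rho_{i'}\})\rangle$ as $\sum_i\langle s_i,L(\prod_{i'}\rho_{i'})\rangle$ for the \emph{full} operator applied to the product density, and then uses the ground-state identities \eqref{eq:FK identity 1}--\eqref{eq:FK identity 2} to convert the entire drift-plus-diffusion package into $D\bigl(s_i,\prod_{i'}\rho_{i'}/\rho^\star\bigr)$, after which the off-diagonal contributions die because $\nabla s_i$ and $\nabla(\rho_{i'}/\rho_{i'}^\star)$ live on disjoint coordinates and only the factor $1+\epsilon$ from Assumption~\ref{assump:meanfield1} survives as an error. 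In your route the same dangerous terms appear, but scattered: after expanding $\rho_i\rho_{i'}-\rho_i^\star\rho_{i'}^\star=(s_i+s_{i'}+s_is_{i'})\rho_i^\star\rho_{i'}^\star$, the contributions $\bigl\langle\partial_{x_i}s_i,\,F_i(\{(s_i+s_{i'})\rho_i^\star\rho_{i'}^\star\})\bigr\rangle$ are \emph{not} ``$\eta_1$-bounded contributions of the same flavor''---they are $O(1)$ in $\eta_1$, of size comparable to $D(s,s)$ itself weighted by $\nabla V$. They can only be disposed of by showing that, together with your retained diffusion cross term $\sum_i\int s_i\,\partial_{x_i}s_i\,F_i(\{\rho_{ii'}^\star\})$, they reassemble (via $\rho^\star\nabla V=-\beta^{-1}\nabla\rho^\star$ and exact $x_{i'}$-derivatives that integrate to zero) into the Dirichlet form up to genuinely $O(\eta_1)$ remainders. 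That reassembly is the heart of the proof, and asserting it is the step you have skipped; carrying it out amounts to rederiving the paper's chain of equalities.

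A second, independent problem sits in your third piece. You reduce $F_i(\{\rho_i^\star\rho_{i'}^\star-\rho_{ii'}^\star\})$ to an integral of the shape $\int\epsilon\,\rho^\star\,\nabla s\cdot\nabla V\,dx$, but Assumption~\ref{assump:meanfield1} only controls $\|\epsilon\|_{H^1(\rho^\star)}$, which does not bound $\|\epsilon\,\nabla V\|_{L^2(\rho^\star)}$; your ``two rounds of integration by parts'' do not escape this, since integrating by parts once produces $\int\epsilon\,\Delta s\,\rho^\star$ (uncontrolled second derivatives of $s$) and integrating by parts again returns you to the original expression. The extra ``structural constant of $V$'' you concede is therefore not a cosmetic constant: it is an unbounded weight that prevents you from landing on the clean factor $\sqrt{2}\eta_1$ in the theorem. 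The paper sidesteps this because in its route $\epsilon$ only ever appears inside the Dirichlet form as $\|(\prod_i\rho_i/\prod_i\rho_i^\star)\,\epsilon\|_D$, i.e., after the $\nabla V$ dependence has already been traded for the weight $\rho^\star$. To repair your proof you would need either an additional hypothesis bounding $\nabla V$ against $\rho^\star$, or to route the $\epsilon$-error through the operator identities \eqref{eq:FK identity 1}--\eqref{eq:FK identity 2} as the paper does.
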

\begin{proof}
For $\rho_i,\{\rho_{ii'}\}_{i'\in[d]\setminus i}$ satisfying the constraints in \eqref{eq:problem for proof}, we have
\begin{eqnarray}
0&= &\sum_{i=1}^d \langle s_i, L_i(\rho_i,\{\rho_{i} \rho_{i'}\}_{i'})\rangle  + \langle s_i,  T_i(\{\rho_{ii'}-\rho_i\rho_{i'}\}_{i'\in[d]\setminus i})\rangle\cr
&=&\sum_{i=1}^d\langle s_i, L_i(\rho_i,\{\rho_{i} \rho_{i'}\}_{i'})\rangle  - \left\langle \frac{\partial s_i}{\partial x_i},   F_i(\{\rho_{i{i'}}-\rho_i\rho_{i'}\}_{{i'}\in[d]\setminus i})\right\rangle\cr 
&=& \sum_{i=1}^d\left \langle s_i, L\left(\prod^d_{{i'}=1}\rho_{i'}\right)\right \rangle - \left\langle \frac{\partial s_i}{\partial x_i},  F_i(\{\rho_{i{i'}}-\rho_i\rho_{i'}\}_{{i'}\in[d]\setminus i})\right\rangle\cr 
&=& \sum_{i=1}^d\left \langle s_i, \rho^\star L\left(\frac{\prod^d_{{i'}=1}\rho_{i'}}{\rho^\star}\right)\right \rangle  - \left\langle \frac{\partial s_i}{\partial x_i},   F_i(\{\rho_{i{i'}}-\rho_i\rho_{i'}\}_{{i'}\in[d]\setminus i})\right\rangle\cr 
&=& \sum_{i=1}^d\left \langle  \frac{\partial s_i}{\partial x_i} , \frac{\partial}{\partial x_i}\left( \frac{\prod^d_{{i'}=1}\rho_{i'}}{\rho^\star}\right) \rho^\star \right \rangle  - \left\langle \frac{\partial s_i}{\partial x_i},  F_i(\{\rho_{i{i'}}-\rho_i\rho_{i'}\}_{{i'}\in[d]\setminus i})\right\rangle\cr 
&=& \sum_{i=1}^d \left \langle  \frac{\partial s_i}{\partial x_i}, \frac{\partial}{\partial x_i}\left( \frac{\rho_i}{\rho^\star_i}\right) \rho_i^\star\right \rangle +  \left \langle \frac{\partial s_i}{\partial x_i}, \frac{\partial }{\partial x_i} \left( \frac{\prod_{i=1}^d \rho_i}{\prod_{i=1}^d \rho_i^\star}\epsilon\right)  \rho^\star \right \rangle - \left\langle \frac{\partial s_i}{\partial x_i},   F_i(\{\rho_{ij}-\rho_i\rho_{i'}\}_{{i'}\in[d]\setminus i})\right\rangle\cr 
&\geq &\sum_{i=1}^d D\left(  s_i,\frac{\rho_i}{\rho^\star_i}\right)- \sum_{i=1}^d\left\langle \frac{\partial s_i}{\partial x_i},    F_i(\{\rho_{i{i'}}-\rho_i\rho_{i'}\}_{{i'}\in[d]\setminus i})\right\rangle- \sqrt{\sum_{i=1}^d \|s_i\|^2_D} \left\|\frac{\prod_{i=1}^d \rho_i}{\prod_{i=1}^d \rho_i^\star}\epsilon\right\|_{D}\cr
&\geq &\sum_{i=1}^d D\left(  s_i,\frac{\rho_i}{\rho^\star_i}\right)-\sqrt{\sum_{i=1}^d \|s_i\|^2_D}  \left\|\frac{\prod_{i=1}^d \rho_i}{\prod_{i=1}^d \rho_i^\star}\epsilon\right\|_{D}-  \eta_2 \sqrt{\sum_{i=1}^d \|s_i\|^2_D}\sqrt{\sum^d_{i=1}\left\|\frac{\rho_i}{\rho_i^\star} \right\|^2_{L^2(\rho^\star)}} \label{eq:theorem part 1} 
\end{eqnarray}
The first equality follows from the first constraints of \eqref{eq:problem for proof}. The second equality follows from integration by parts and the definitions in \eqref{eq:transport operator} and \eqref{eq:force operator}. The fourth and fifth equality is due to the property of the Fokker-Planck operator \eqref{eq:FK identity 1} and \eqref{eq:FK identity 2}. The sixth equality is based on letting $\prod_{i=1}^d \rho^\star_i = \rho^\star(1 +\epsilon)$. The first  inequality is due to Cauchy-Schwarz, and the last inequality is due to Assumption~\ref{assump:meanfield2}.
Now, let 
\begin{equation}
s_i = \frac{\rho_i}{\rho^\star_i} -1
\end{equation}
from \eqref{eq:theorem part 1} we get 
\begin{eqnarray}
0&\geq & \sum_{i=1}^d \|s_i\|_D^2- \sqrt{2\eta_1^2}\sqrt{\sum_{i=1}^d \|s_i\|^2_D} \sqrt{\sum_{i=1}^d\|s_i+1\|^2_{L^2(\rho^\star)}+\|s_i\|^2_{D}}-\sqrt{\sum_{i=1}^d \|s_i\|^2_D} \sqrt{\sum^d_{i=1}\left\|\frac{\rho_i}{\rho_i^\star} \right\|^2_{L^2(\rho^\star)} }\eta_2\cr
&\geq& (1-\sqrt{2}\eta_1) \sum_{i=1}^d \|s_i\|_D^2 -\sqrt{2}\eta_1\sqrt{\sum_{i=1}^d \|s_i\|^2_D} \sqrt{\sum_{i=1}^d \left\| s_i+1\right\|^2_{L^2(\rho^\star)}}-\sqrt{\sum_{i=1}^d \|s_i\|^2_D}\sqrt{\sum^d_{i=1}\left\|\frac{\rho_i}{\rho_i^\star} \right\|^2_{L^2(\rho^\star)}} \eta_2\cr
\end{eqnarray}
where we apply Assumption~\ref{assump:meanfield1} in the first inequality. Combining this with Assumption~\ref{assump:coercivity} we get the conclusion
\begin{equation}
\lambda_1(1-\sqrt{2}\eta_1) \sqrt{\sum_{i=1}^d \left\| \frac{\rho_i}{\rho_i^\star}-1\right\|^2_{L^2(\rho_i^\star)}}\leq \sqrt{2}\eta_1\sqrt{\sum_{i=1}^d \left\| \frac{\rho_i}{\rho_i^\star}\right\|^2_{L^2(\rho_i^\star)}}+\eta_2\sqrt{\sum^d_{i=1}\left\|\frac{\rho_i}{\rho_i^\star} \right\|^2_{L^2(\rho_i^\star)}}.
\end{equation}
\end{proof}

\subsection{Example}\label{section:example}

At the first sight, it seems like the only pairwise potential that satisfies Assumption~\ref{assump:meanfield2} with a small enough $\eta_2$ is that each $V_{ii'}$ has the form $V_{ii'}(x_i,x_{i'}) = E_i(x_i)+E_{i'}(x_{i'})$, i.e. a separable potential. However, in this section we argue that it is possible to have non-separable $V_{ii'}(x_i,x_i') = E_i(x_i)+E_{i'}(x_i') + \delta_{ii'}(x_i,x_i')$ where $\delta_{ii'}$ is much larger compare to $E_i, E_{i'}$. Let $E_i=E$ for all $i\in [d]$, and further
\begin{equation} \frac{\partial}{\partial x_i}\delta_{ii'}(x_i,x_j) 
:=\sum_{k_i, k_{i'}\in [n]}a_{ii'}(k_i,k_{i'})\phi_{k_i}(x_i)\phi_{k_{i'}}(x_{i'})
\end{equation}
where each $a_{ii'}\in \mathbb{R}^{n\times n}$, and $\{\phi_j\}_{j=1}^n\subset L^2(\rho^\star)$ is a set of orthonormal single variable basis. We let $a_{ii'}(k_i,k_{i'})\sim \mathcal{N}(0,\sigma^2)$. In what follows, we show that one can have $\sigma$ as large $o(\sqrt{d})$, and still $\eta_2/\lambda_1\sim o(1)$, giving rise to a meaningful upper-bound in Theorem~\ref{theorem:main}. In this case, the term $\delta_{ii'}$ completely dominates $E_i,E_{i'}$ when $d$ is large.

To calculate $\eta_2$ in Assumption~\ref{assump:meanfield2} for this setting, we first notice that
\begin{eqnarray}\label{eq:bounding assumption3}
\sum_{i'\in [d]\setminus i} \left\langle t_i\frac{\partial V_{ii'}}{\partial x_i}, \rho_{ii'}-\rho_i\rho_j\right\rangle  &=& \sum_{i'\in [d]\setminus i} \left\langle t_i\frac{\partial \delta_{ii'}}{\partial x_i}, \rho_{ii'}-\rho_i\rho_{i'}\right\rangle\cr
&=&   \sum_{i'\in [d]\setminus i}  \langle\tilde t_i a_{ii'},  G_{ii'}\rangle - \sum_{i'\in [d]\setminus i}  \langle\tilde t_i a_{ii'},  \rho_{i}\rho_{i'}\rangle
\end{eqnarray}
where $ G_{ii'}\in \mathbb{R}^{n\times n},  \tilde t_i\in \mathbb{R}^{n\times n}$ are formed by discretizing with $\{\phi_i\}_{i=1}^n$. In particular
\begin{equation}
G_{ii'} = \left[\int \phi_j(x_i) \rho_{ii'}(x_i,x_i') \phi_{j'}(x_i')dx_idx_i'\right]_{j,j'\in [n]},\quad G_{ii'} = \text{diag}\left(\left[\int\phi_j(x_i) \phi_j'(x_i)\rho_{i}(x_i) dx_i\right]_{j,j'\in[n]}\right),
\end{equation}
$i,i'\in[d]$, and
\begin{equation}
\tilde t_i = \left[\int t_i(x_i) \phi_j(x_i)  \phi_{j'}(x_i)\rho_i^\star(x_i)dx_i\right]_{j,j'\in [n]}.
\end{equation}
One can think of $G=[G_{ii'}]_{i,i'\in [d]}$ as a size $n d \times nd$ matrix. It has several properties, in particular, $G\succeq 0$ due to the constraint in \eqref{eq:problem for proof}. Further $\tr(G_{ii'})\leq n$. We can factorize $G = Y Y^T$ where $Y =  [Y_1^T,\cdots, Y_d^T]^T$ and each $Y_i\in \mathbb{R}^{n\times n}$, and further $\|Y_i\|_F^2 = \tr(G_{ii}) \leq n$. Now we have
\begin{eqnarray}\label{eq:bounding spectral norm}
\sum_{i'\in [d]\setminus i}  \langle\tilde t_i a_{ii'},  G_{ii'}\rangle &=&\langle [a_{ii'}]_{i,i'\in[d]} Y, \text{diag}([\tilde t_i]_{i\in[d]})Y\rangle \cr 
&\leq& \| [a_{ii'}]_{i,i'\in[d]} \|_2 \|Y\|_F \sqrt{\sum_{i\in [d]} \|\tilde t_iY_i\|^2_F}\cr 
&\leq & O(\sigma\sqrt{nd} )\|Y\|_F \sqrt{\sum_{i\in [d]} \|\tilde t_i\|^2_F}\cr
&\leq & O(\sigma nd )\sqrt{\sum_{i\in [d]} \|t_i\|^2_{L^2(\rho^\star)}}\cr 
&\leq &O(\sigma n\sqrt{d})\sqrt{\sum_{i\in [d]} \|t_i\|^2_{L^2(\rho^\star)}}\sqrt{\sum_{i\in[d]} \left\| \frac{\rho_i}{\rho_i^\star}\right\|^2_{L^2(\rho^\star)}}\cr
\end{eqnarray}
for $\left\|\frac{\rho_i}{\rho^\star}\right\|_{L^2(\rho^\star)}\leq \text{constant}$ (Eq.~\eqref{eq:regularity}). The second inequality follows from standard spectral norm bounds on the $nd\times nd$ random Gaussian matrix $[a_{ii'}]_{i,i'\in[d]}$ \cite{vershynin2018high}, and the third inequality follows from  $\|Y_i\|_F\leq \sqrt{n}$.  In the following we treat $n$ as a constant. Similarly, by replacing $Y$ in the above derivation by 
\begin{equation}
Y\leftarrow [ [\langle \rho_i, \phi_j \rangle]_{j\in [n]}  ]_{i\in [d]} \in \mathbb{R}^{nd},
\end{equation}
we can also derive
\begin{equation}
\sum_{i'\in [d]\setminus i}  \langle\tilde t_i a_{ii'},  \rho_i\rho_{i'}\rangle \leq O(\sigma \sqrt{d})\sqrt{\sum_{i\in [d]} \|t_i\|^2_{L^2(\rho^\star)}}\sqrt{\sum_{i\in[d]} \left\| \frac{\rho_i}{\rho_i^\star}\right\|^2_{L^2(\rho^\star)}}.
\end{equation}
This gives an upperbound for \eqref{eq:bounding assumption3}, and by letting $t_i = \frac{\partial s_i}{\partial x_i}$, we obtain $\eta_2 = O(\sigma \sqrt{d})$ in Assumption~\ref{assump:meanfield2}.

We now compare $\eta_2$ to the eigenvalue $\lambda_1$ in Assumption~\ref{assump:coercivity}. Let $\lambda$ and $\psi$ and $\nu$ be the second lowest eigenvalue and its corresponding right and left eigenfunctions for the following equation
\begin{equation}
(L^\text{MF}_i\psi)(x_i) := -\frac{\partial^2 \psi(x_i)}{\partial x_i^2} - \frac{\partial}{\partial x_i}\left(\psi(x_i)\frac{\partial E(x_i)}{\partial x_i} \right) = \lambda \psi(x_i),\quad L_0^*\nu = \lambda \nu,\quad \nu = \psi/\rho^\text{MF}
\end{equation}
where $\rho^\text{MF} \propto \exp(-E)$. Letting $\beta = (d-1)^{-1}$, the operator $L$ becomes
\begin{equation}
L \rho =  (d-1)\sum_{i\in[d]} L^\text{MF}_i \rho+ \sum_{i\in[d]}\frac{\partial}{\partial x_i}\left(\sum_{i'\in[d]\setminus i} \frac{\partial \delta_{ii'}}{\partial x_{i'}} \rho\right).
\end{equation}
We now want to show this operator $L$ is not that different from $  (d-1)\sum_{i\in[d]} L^\text{MF}_i$. The second smallest eigenvalue of the first term $(d-1)\sum_{i\in[d]} L^\text{MF}_i$ is simply $(d-1)\lambda$, stemming from the fact that
\begin{equation}
 (d-1)\sum_{i\in[d]} L^\text{MF}_i \psi_1 = (d-1)\lambda  \psi_1.
\end{equation}
by choosing $\psi_1(x) = \psi(x_1)$. We now want to study the effect of perturbation on the eigenvalue caused by  $\delta_{ii'}$'s. Let $\nu_1(x) =  \nu(x_1)$. First order perturbation theory \cite{griffiths2018introduction} says the eigenvalue of $L$
\begin{eqnarray}
\lambda_1 \approx  (d-1)\lambda  + \sum_{i\in[d]}\left\langle \nu_1, \sum_{i\in [d]}\frac{\partial}{\partial x_i} \sum_{i'\in [d]\setminus i}\frac{\partial \delta_{ii'}}{\partial x_i}\psi_1\right\rangle
\end{eqnarray}
and further
\begin{eqnarray}
\sum_{i\in[d]}\left\langle \nu_1, \sum_{i\in [d]}\frac{\partial}{\partial x_i} \sum_{i'\in [d]\setminus i}\frac{\partial \delta_{ii'}}{\partial x_i}\psi_1\right\rangle&=& \left\langle \frac{\partial\nu}{\partial x_1},  \sum_{i'\in [d]\setminus 1}\frac{\partial \delta_{1i'}}{\partial x_1}\psi_1\right\rangle\cr
&=& \sum_{i'\in [d]\setminus 1}\left\langle \frac{\partial\nu}{\partial x_1}\frac{\partial \delta_{1i'}}{\partial x_1}, \psi \right\rangle\cr 
&\leq& O(\sigma \sqrt{d}) \left\| \frac{\partial\nu}{\partial x_1}\right\|_{L^2(\rho^\star)} \left\| \frac{\psi}{\rho^\star}\right\|_{L^2(\rho^\star)}\cr 
&\leq& O(\sigma \sqrt{d})\left\| \nu_1\right\|_{L^2(\rho^\star)}\left\|\psi_1\right\|_{L^2(\rho^\star)}\cr 
\end{eqnarray}
where the first inequality follows from $\text{Var}\left(\left\langle \frac{\partial\nu}{\partial x_1}\frac{\partial \delta_{1i'}}{\partial x_1}, \psi \right\rangle\right) \leq \left\| \frac{\partial\nu}{\partial x_1}\right\|^2_{L^2(\rho^\star)} \left\| \frac{\psi}{\rho^\star}\right\|^2_{L^2(\rho^\star)}$, due to the fact that $\left\langle \frac{\partial\nu}{\partial x_1}\frac{\partial \delta_{1i'}}{\partial x_1}, \psi \right\rangle$ is a sub-gaussian random variable \cite{vershynin2018high}. The second inquality follows from \eqref{eq:regularity} and $\nu_1=\nu$. By first order perturbation theory of the eigenvalue, we get $\lambda_1 \approx (d-1)\lambda +  O(\sigma \sqrt{d} )$. In this case
\begin{equation}
\eta_2/\lambda_1 = o(1)
\end{equation}
for $\sigma = o(\sqrt{d})$, leading to a meaningful upper-bound for Thm.~\ref{theorem:main}.

A similar perturbation analysis based on first order perturbation theory gives eigenfunction perturbation $\eta_1$ in Assumption~\ref{assump:meanfield1} where $\eta_1/\lambda_1 = o(1)$ if $\sigma = o(\sqrt{d})$. 

\section{Numerical Experiments}
\label{sec:numerical}

\subsection{Stationary solution} 
In this subsection, we demonstrate the numerical performance of solving the stationary FPE via cluster moment relaxation. We consider two systems in this subsection, one with the separable double-well potential, which is intrinsically an 1D potential so we can easily visualize, and the other one with the Ginzburg-Landau potential, for which we can only try to compare the moments or marginals since the true density is exponentially sized.

\subsubsection{Double-well Potential}

We consider the following double-well potential function
\begin{align}
    V(x) = (x_1^2 - 1)^2 + 12\sum_{j=2}^d x_j^2, 
    \label{eq:DW-potential}
\end{align}
which essentially only contains one-body terms and
can be easily separable. The equilibrium distribution for this is simply
\begin{align}
    \rho^\star(x) = \frac{1}{Z_{\beta}} \exp \left(-\beta (x_1^2 - 1)^2 \right) \prod_{j=2}^d \exp \left(-12\beta x_j^2 \right),
\end{align}
where $Z_{\beta}$ is the partition function. The separable structure 
gives us a convenient way to visualize the distribution via the 1-marginals. In this example, we take $\beta=2.5$ and a $d=10$ dimensional system. 

We use monomials as test functions for this example. More specifically, we let the single variable basis set to be monomials, i.e. $\{\phi_j(\cdot)=(\cdot)^{j-1}\}_{j=1}^{9}$ and we take the test functional space to be the induced $1$-body functional space $\mathcal{T} = \mathcal{F}_1$. The potential function $V$ can be exactly expanded by monomials up to order $4$. For convenience, we denote the moment matrix $M$ in \eqref{eq:moment_constraint} as second-order moments with notation $M_2$ and introduce the first-order moment vector $M_1=\text{vec}(M_{\I \J, \I'=\emptyset \J'=0})$ to be a subset of $M_2$. We solve the feasibility semidefinite programming problem \eqref{eq:moment_constraint} with CVX \cites{grant2008graph,grant2014cvx}.

We compare the estimated moments $\hat{M}_1$ and $\hat{M}_2$ from the proposed semidefinite relaxation with the ground truth moments. We measure the relative errors in first and second order moments, defined by
\begin{align}
    E_1 = \frac{\|M_1 - \hat{M}_1\|_2}{\|M_1\|_2}, E_2 = \frac{\|M_2 - \hat{M}_2\|_{\text{F}}}{\|M_2\|_{\text{F}}},
\end{align}
respectively. In our example, we achieve $E_1=5.30\times 10^{-3}$ and $E_2=7.84\times 10^{-3}$ error for first and second order moments.

Next, we can estimate the 1-marginals from the estimated moments by solving the following maximum entropy problem
\begin{align}
    \ &\max_{\rho_i}\ \int \rho_i(x_i)\ln(\rho_i(x_i))dx_i\nonumber\\
    \text{s.t.} \ & \int \rho_i(x_i) \phi_j(x_i) dx_i = M_{\I=\{i\} \J=\{j\}, \I'=\emptyset \J'=0},
    \label{eq:maxium_entropy}
\end{align}
where $\rho_i$ is the $i$-th marginal, which is a standard method for density estimation from moments~\cite{dudik2007maximum}.  We show the estimated $1$-marginal by solving a discretized program of \eqref{eq:maxium_entropy} and the true $1$-marginal in \figref{fig:DW_d10}.

\begin{figure}[htb]
    \centering
    \begin{subfigure}{0.50\textwidth}
        \centering
        \includegraphics[width=\textwidth]{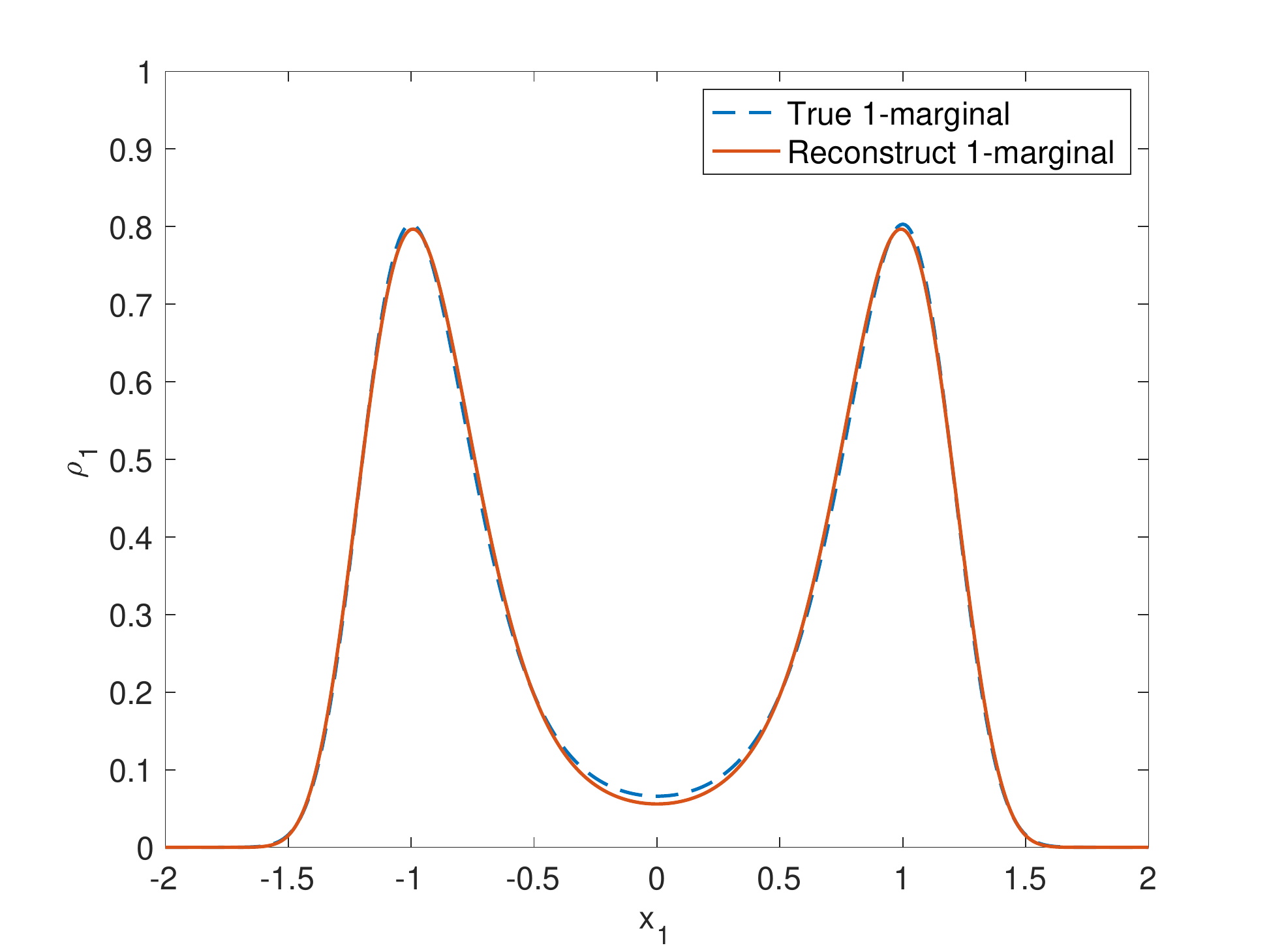}
    \end{subfigure}
    \caption{Visualization of ground truth $1$-marginals and reconstructed $1$-marginals for double-well potential system.}
    \label{fig:DW_d10}
\end{figure}

\subsubsection{Ginzburg-Landau Potential}

The Ginzburg-Landau theory was developed to provide a phenomenological description of many-body systems \cite{GL-model}. In this numerical example, we consider a periodic discretized Ginzburg-Landau model, 
\begin{align}
    V(U) := \sum_{i=1}^{d+1} \frac{\lambda}{2} \left(\frac{U_i - U_{i-1}}{h}\right)^2 + \frac{1}{4\lambda} (1 - U_i^2)^2,
    \label{eq:discrete-GL}
\end{align}
where $h = 1/(d+1)$ and $U_{d+1}=U_1$, $U_0=U_d$ based on periodic condition. We fix $d=10$, $\lambda=0.03$ and the temperature $\beta=1/16$. We let the computational domain to be $U\in [-1.6,1.6]^d$.

In this example, we demonstrate the performance of the proposed relaxation with complex Fourier basis. We note that the performance using monomial basis is similar. More specifically, we take $\{\phi_j(\cdot)=\exp(\iu \frac{\pi}{3.2}\cdot)^j\}_{j=-10}^{10}$ as univariate basis functions for each dimension which induce the test function space $\mathcal{T} = \mathcal{F}_1$. The potential function $V$ can be expanded with the same set of $21$ complex Fourier basis functions in each dimension with relative error $5.31\times 10^{-3}$.

Using the test function space discussed above, we achieve $E_1=1.55\times 10^{-2}$ and $E_2=5.11\times 10^{-1}$ relative error in first and second order moments. The proposed method is able to identify the first order moment accurately but the estimated second order moment has a relatively large error. We may need to use  to third order moments if we want to identify the second order moments to high accuracy, due to the deviation of Ginzburg-Landau model from a mean-field model.

Similarly we can solve the maximum entropy problem \eqref{eq:maxium_entropy} to reconstruct the estimated marginals from the moments for any dimension. Here we compare the reconstructed marginal (red solid line) and ground truth marginal (blue dashed line) in \figref{fig:GL_d10}.

\begin{figure}[htb]
    \centering
    \begin{subfigure}{0.50\textwidth}
        \centering
        \includegraphics[width=\textwidth]{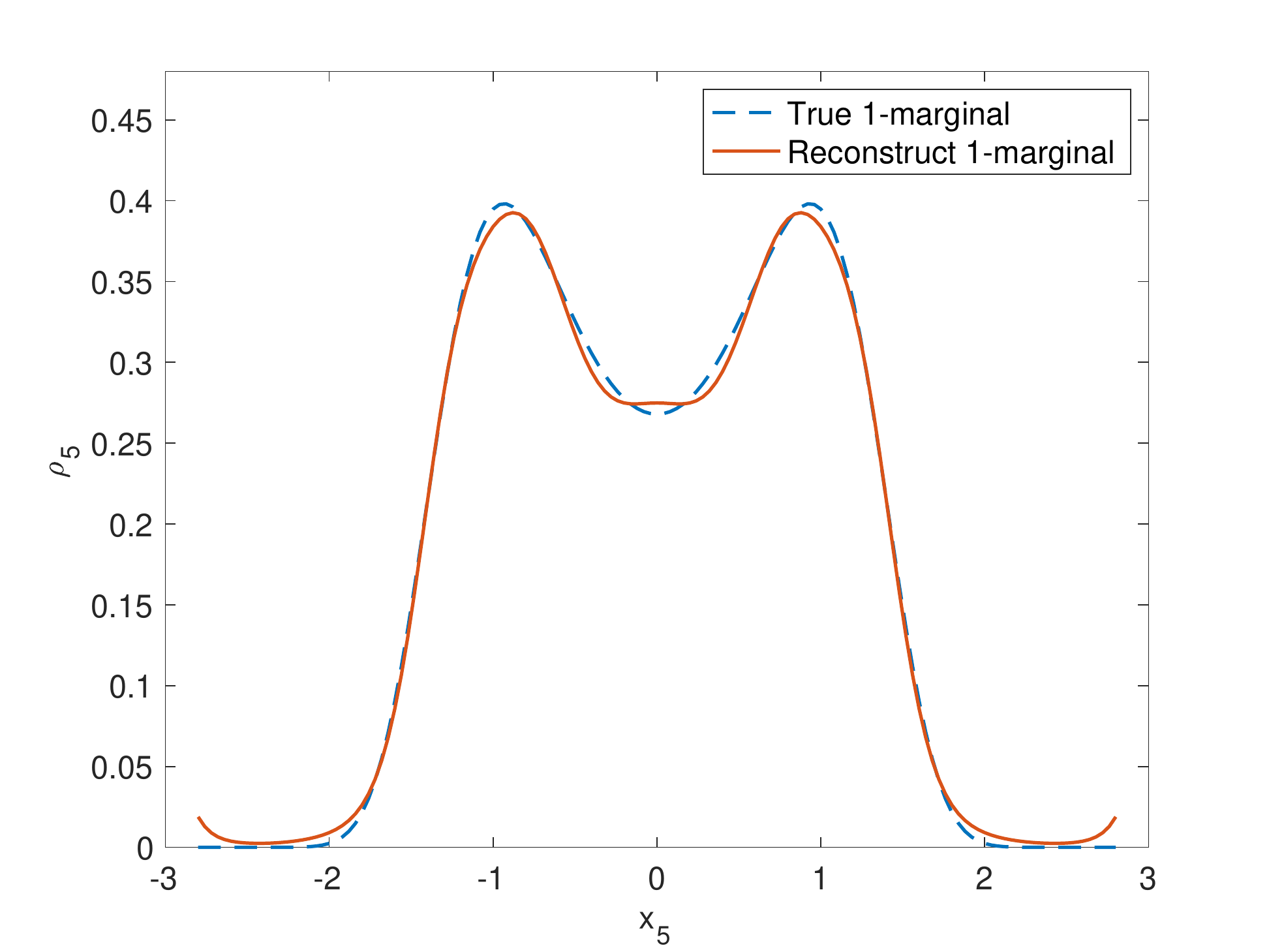}
    \end{subfigure}
    \caption{Visualization of ground truth $1$-marginals and reconstructed $1$-marginals for periodic Ginzburg-Landau potential system.}
    \label{fig:GL_d10}
\end{figure}

\subsection{Time-dependent problem} 

In this subsection, we demonstrate the numerical performance of solving the time-dependent FPE via marginal relaxation. More specifically, we consider the extension of our framework to time dependent Fokker-Planck equation as discussed in \secref{sec:time_dependent}.

We consider a double-well system \eqref{eq:DW-potential} with $d=2$ for better visualization purpose. For all spatial dimensions, we restrict the support of our marginals in hypercube $[-2,2]^2$ and use $100$ evenly distribution grid points to discretize each dimension. This means we consider the density $\rho$ to be negligible outside the hypercube $[-2,2]^2$, which is a property of systems with confining potential. The time dimension is discretized by $100$ grid points as well with a gap $\delta t=2\times 10^{-4}$. We solve the time dependent optimization problem \eqref{eq:key_evolving} with marginals of order at most $2$. 

We use the initial and boundary conditions in \eqref{eq:time dependent bc}. We assume the particles are concentrated around $(-1,0,0,0,0)$, one of the local minima of the double-well potential, at the beginning of evolution. More specifically, we let $\rho(x,0) = \rho_1(x_1,0)\prod^d_{i=2}\rho_i(x_i,0)$ where $\rho_1(x_1,0) = \text{Uniform}[-1,2,0.8]$, $\rho_i(x_i,0) \propto \exp(-0.3 \vert x_i \vert^2), i \neq 1$. We test two scenarios for absorbing states: (1) there is an absorbing state at $0.2$ and we require $\rho(0.2,\tau)=0$ for all $\tau\in [0, 0.02]$, and (2) there is an absorbing state at $1.0$ and we require $\rho(1.0,\tau)=0$ for all $\tau\in [0, 0.02]$. 

Since the double-well potential is separable, the ground truth $1$-marginals can be obtained by solving the time-dependent Fokker-Planck equation for the first dimension only,  
\begin{align}
   &\frac{\partial \rho_1(x_1,\tau)}{\partial \tau} = \frac{1}{\beta} \frac{\partial^2 \rho_1(x_1, \tau)}{\partial x_1^2} + 4x_1(x_1^2 - 1)\frac{\partial \rho_1(x_1,\tau)}{\partial x_1}, \notag\\ &\rho_1(x_1,0)=\text{Uniform}(-1.2,-0.8),\quad
   \rho_1(x_a,\tau) = 0,\ \forall \tau\in [0,0.02],
   \label{eq:truth_DW}
\end{align}
where $x_a=0.2,1.0$ corresponds to scenarios (1) and (2), respectively. 

We compare the estimated $1$-marginals with the ground truth marginals by solving \eqref{eq:truth_DW}. The average elementwise relative error for all $1$-marginals over time is (1) $1.02\times 10^{-1}$ and (2) $1.39\times 10^{-1}$ for the two scenarios mentioned above. We further visualize the evolution of $1$-marginals in \figref{fig:evolving1} and \figref{fig:evolving2}.

\begin{figure}[htb]
   \centering
   \begin{subfigure}{0.49\textwidth}
   \includegraphics[width=1.0\textwidth]{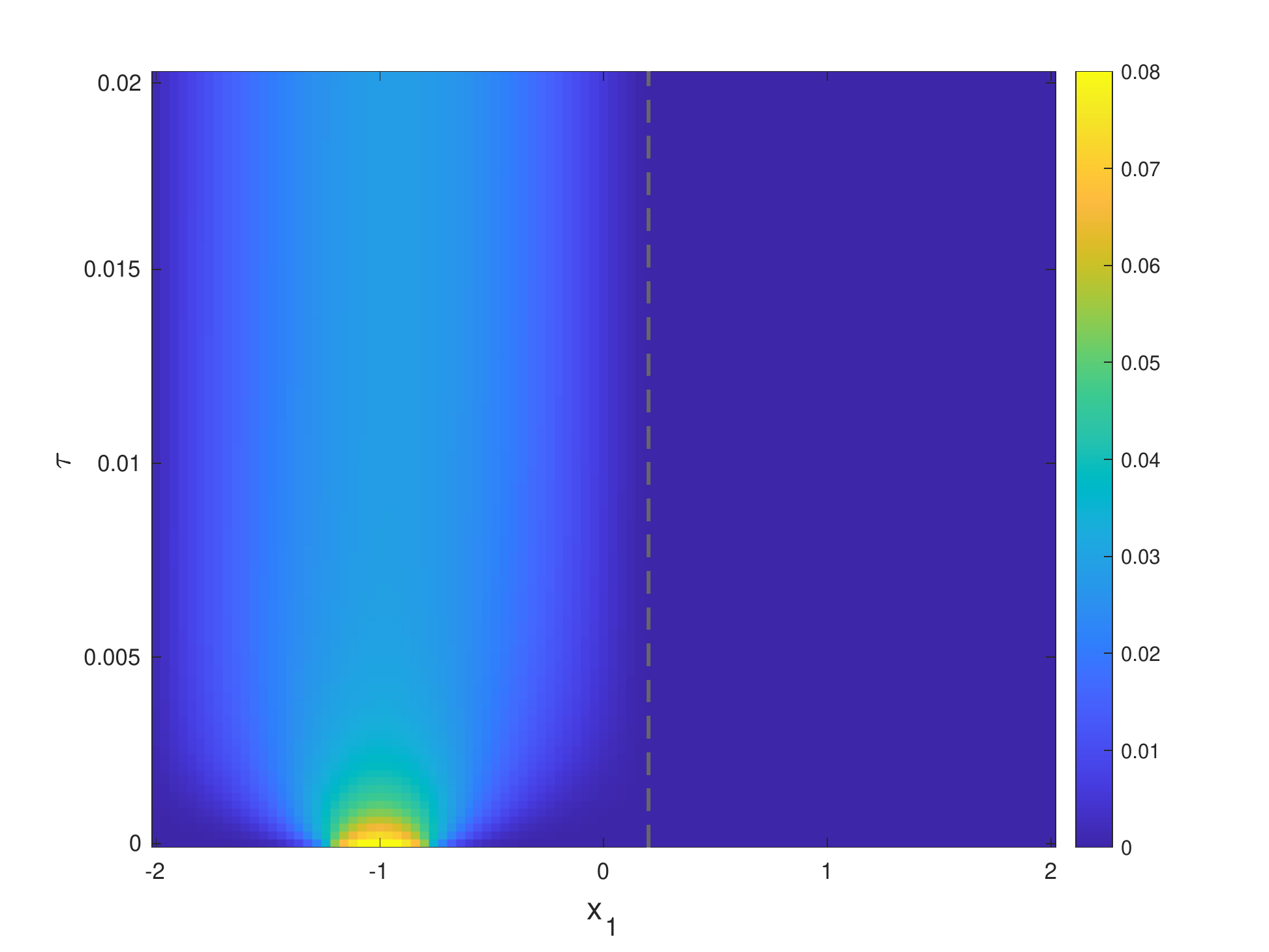}
   \caption{Ground truth $1$-marginals over time}
   \end{subfigure}
   \begin{subfigure}{0.49\textwidth}
   \includegraphics[width=1.0\textwidth]{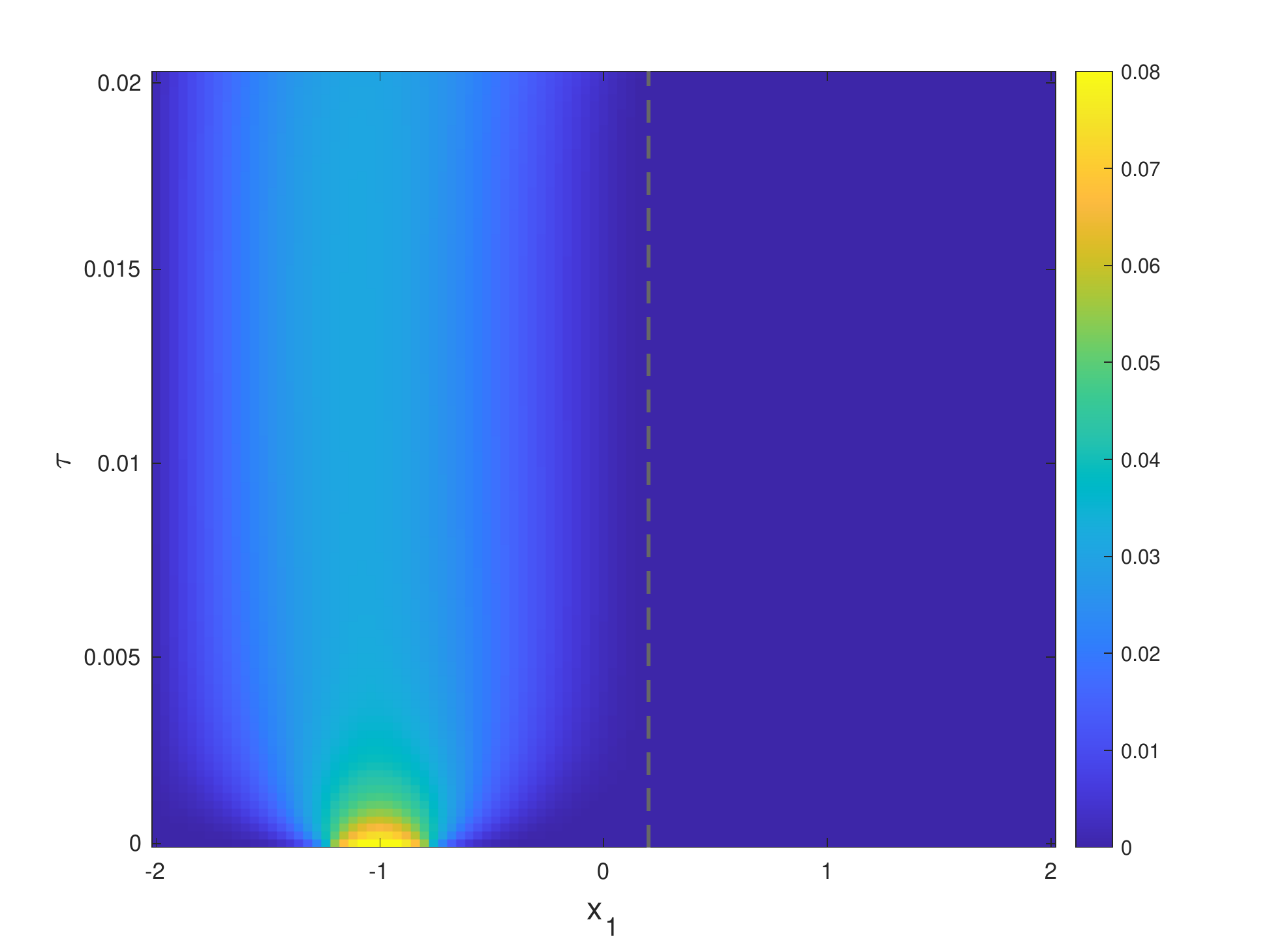}
   \caption{Estimated $1$-marginals over time}
   \end{subfigure}
   \caption{Comparison of the ground truth $1$-marginals and the estimated marginals over time. Here we assume there exists an absorbing state at $x_1=0.2$. We use dashed vertical line to indicate the absorbing state.}
   \label{fig:evolving1}
\end{figure}

\begin{figure}[htb]
   \centering
   \begin{subfigure}{0.49\textwidth}
   \includegraphics[width=1.0\textwidth]{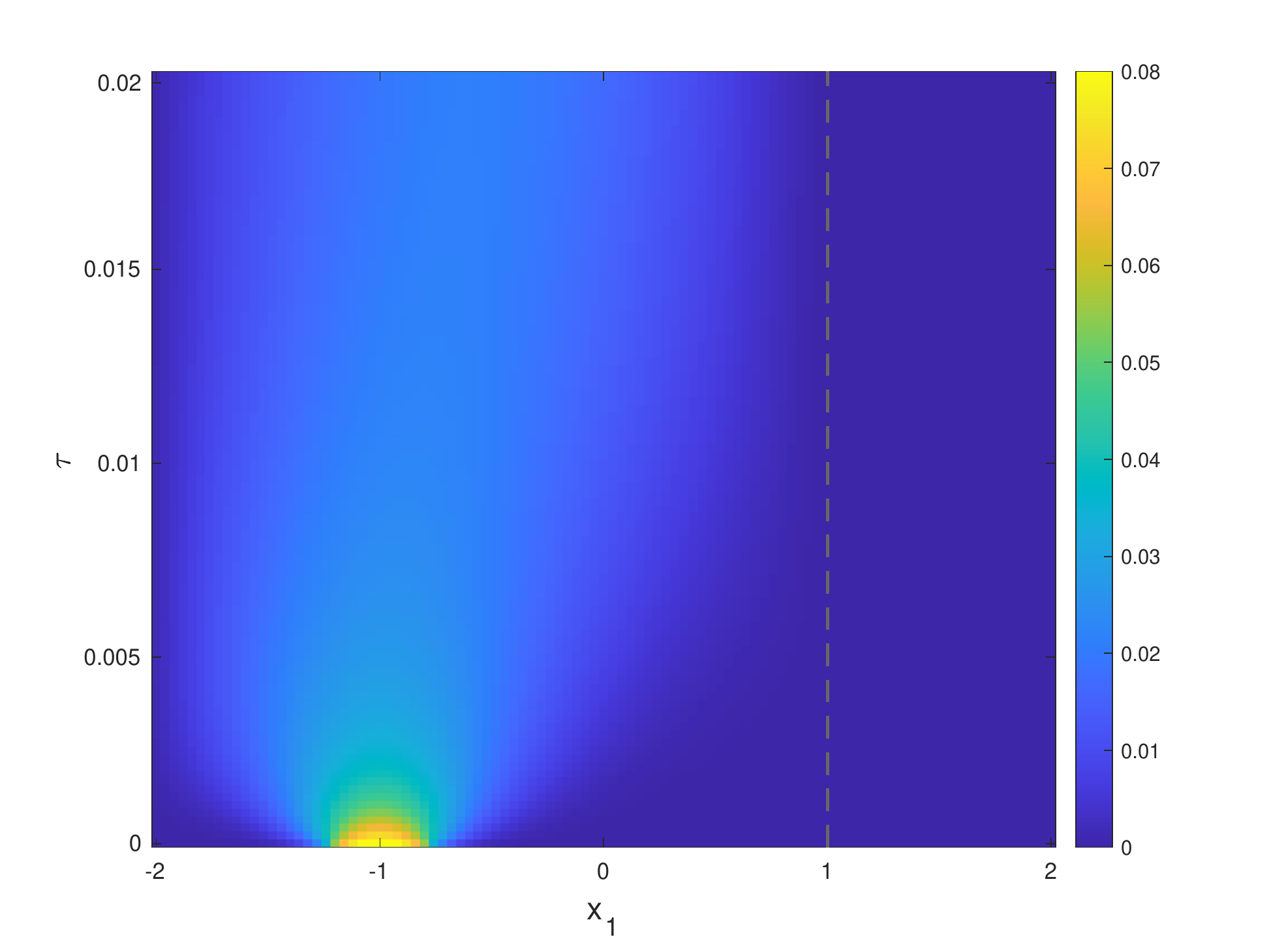}
   \caption{Ground truth $1$-marginals over time}
   \end{subfigure}
   \begin{subfigure}{0.49\textwidth}
   \includegraphics[width=1.0\textwidth]{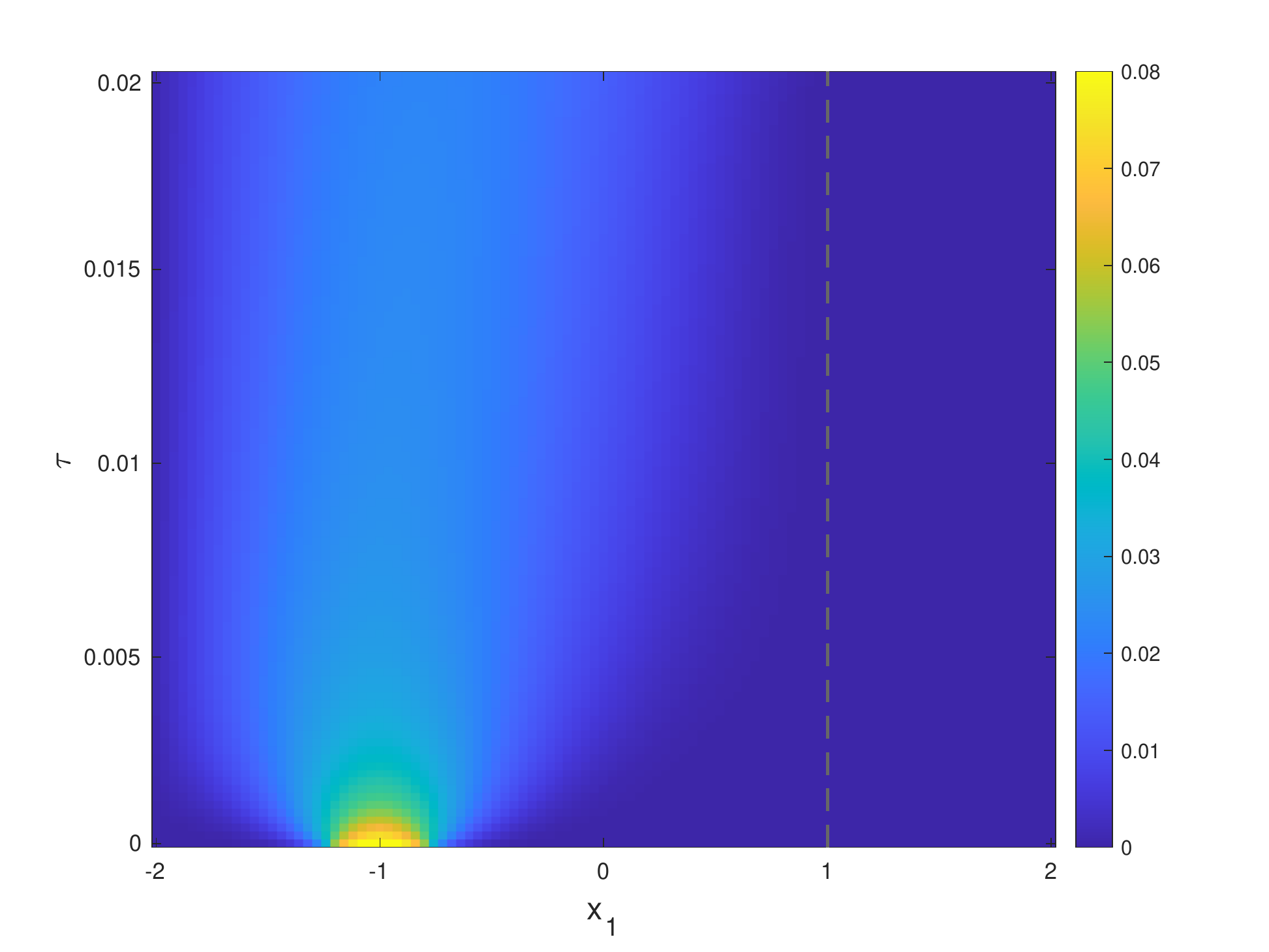}
   \caption{Estimated $1$-marginals over time}
   \end{subfigure}
   \caption{Comparison of the ground truth $1$-marginals and the estimated marginals over time. Here we assume there exists an absorbing state at $x_1=1.0$. We use dashed vertical line to indicate the absorbing state.}
   \label{fig:evolving2}
\end{figure}

\section{Conclusion} \label{sec:conclusion}

In this paper, we propose a novel algorithm to solve the low-order marginals/moments for high-dimensional equilibrium distributions in statistical mechanics via solving the Fokker-Planck PDE. We further reformulate the PDEs as constraints in our convex programs via cluster moments or marginal relaxations. The resulting optimization problem is fully deterministic and convex, which can be solved to desired accuracy with existing convex program solvers. 

By solving the low-order marginals/moments instead, we reduce the total number unknown parameters to estimate and as a result, we break the curse of dimensionality. The complexity of the algorithm depends on the number of dimensions and the number of marginals/moments to solve for.  Moreover our algorithm can be used potentially to provide a coarse description of the equilibrium distribution by providing a meanfield solution, and can be used to initialize other local optimization algorithms (e.g. MCMC or tensor-networks) for further refinement. Lastly, we also discuss generalization to time dependent Fokker-Planck equation which could be used to study non-equilibrium physics.

\section{Acknowlegement}
Y.C. and Y.K. acknowledge partial supports from NSF Award No.\ DMS-211563 and DOE Award No.\ DE-SC0022232.

\bibliographystyle{plain}
\bibliography{ref}

@article{han2018solving,
  title={Solving high-dimensional partial differential equations using deep learning},
  author={Han, Jiequn and Jentzen, Arnulf and E, Weinan},
  journal={Proceedings of the National Academy of Sciences},
  volume={115},
  number={34},
  pages={8505--8510},
  year={2018},
  publisher={National Acad Sciences}
}

@book{griffiths2018introduction,
  title={Introduction to quantum mechanics},
  author={Griffiths, David J and Schroeter, Darrell F},
  year={2018},
  publisher={Cambridge university press}
}

@book{vershynin2018high,
  title={High-dimensional probability: An introduction with applications in data science},
  author={Vershynin, Roman},
  volume={47},
  year={2018},
  publisher={Cambridge university press}
}

@article{potra2000interior,
  title={Interior-point methods},
  author={Potra, Florian A and Wright, Stephen J},
  journal={Journal of computational and applied mathematics},
  volume={124},
  number={1-2},
  pages={281--302},
  year={2000},
  publisher={Elsevier}
}

@article{dolgov2020approximation,
  title={Approximation and sampling of multivariate probability distributions in the tensor train decomposition},
  author={Dolgov, Sergey and Anaya-Izquierdo, Karim and Fox, Colin and Scheichl, Robert},
  journal={Statistics and Computing},
  volume={30},
  pages={603--625},
  year={2020},
  publisher={Springer}
}

@article{kobyzev2020normalizing,
  title={Normalizing flows: An introduction and review of current methods},
  author={Kobyzev, Ivan and Prince, Simon JD and Brubaker, Marcus A},
  journal={IEEE transactions on pattern analysis and machine intelligence},
  volume={43},
  number={11},
  pages={3964--3979},
  year={2020},
  publisher={IEEE}
}

@article{dudik2007maximum,
  title={Maximum entropy density estimation with generalized regularization and an application to species distribution modeling},
  author={Dud{\'\i}k, Miroslav and Phillips, Steven J and Schapire, Robert E},
  year={2007},
  publisher={Journal of Machine Learning Research}
}

@book{wille2004new,
  title={New directions in statistical physics: econophysics, bioinformatics, and pattern recognition},
  author={Wille, Luc T},
  year={2004},
  publisher={Springer Science \& Business Media}
}

@article{markowich2000trend,
  title={On the trend to equilibrium for the Fokker-Planck equation: an interplay between physics and functional analysis},
  author={Markowich, Peter A and Villani, C{\'e}dric},
  journal={Mat. Contemp},
  volume={19},
  pages={1--29},
  year={2000}
}

@article{gartner1988mckean,
  title={On the McKean-Vlasov limit for interacting diffusions},
  author={G{\"a}rtner, J{\"u}rgen},
  journal={Mathematische Nachrichten},
  volume={137},
  number={1},
  pages={197--248},
  year={1988},
  publisher={Wiley Online Library}
}

@book{pavliotis2016stochastic,
  title={Stochastic processes and applications},
  author={Pavliotis, Grigorios A},
  year={2016},
  publisher={Springer}
}

@article{artime2018first,
  title={First-passage distributions for the one-dimensional Fokker-Planck equation},
  author={Artime, Oriol and Khalil, Nagi and Toral, Ra{\'u}l and San Miguel, Maxi},
  journal={Physical Review E},
  volume={98},
  number={4},
  pages={042143},
  year={2018},
  publisher={APS}
}

@article{khoo2019convex,
  title={Convex relaxation approaches for strictly correlated density functional theory},
  author={Khoo, Yuehaw and Ying, Lexing},
  journal={SIAM Journal on Scientific Computing},
  volume={41},
  number={4},
  pages={B773--B795},
  year={2019},
  publisher={SIAM}
}

@inproceedings{peng2012approximate,
  title={Approximate inference by intersecting semidefinite bound and local polytope},
  author={Peng, Jian and Hazan, Tamir and Srebro, Nathan and Xu, Jinbo},
  booktitle={Artificial Intelligence and Statistics},
  pages={868--876},
  year={2012},
  organization={PMLR}
}

@article{laurent2003comparison,
  title={A comparison of the Sherali-Adams, Lov{\'a}sz-Schrijver, and Lasserre relaxations for 0--1 programming},
  author={Laurent, Monique},
  journal={Mathematics of Operations Research},
  volume={28},
  number={3},
  pages={470--496},
  year={2003},
  publisher={INFORMS}
}

@article{sontag2007new,
  title={New outer bounds on the marginal polytope},
  author={Sontag, David and Jaakkola, Tommi},
  journal={Advances in Neural Information Processing Systems},
  volume={20},
  year={2007}
}

@article{chen2020multiscale,
  title={Multiscale semidefinite programming approach to positioning problems with pairwise structure},
  author={Chen, Yian and Khoo, Yuehaw and Lindsey, Michael},
  journal={arXiv preprint arXiv:2012.10046},
  year={2020}
}

@article{brydges1976cluster,
  title={The cluster expansion in statistical mechanics},
  author={Brydges, David and Federbush, Paul},
  journal={Communications in Mathematical Physics},
  volume={49},
  pages={233--246},
  year={1976},
  publisher={Springer}
}

@article{raj2023efficient,
  title={Efficient Sampling of Stochastic Differential Equations with Positive Semi-Definite Models},
  author={Raj, Anant and {\c{S}}im{\c{s}}ekli, Umut and Rudi, Alessandro},
  journal={arXiv preprint arXiv:2303.17109},
  year={2023}
}

@book{lasserre2009moments,
  title={Moments, positive polynomials and their applications},
  author={Lasserre, Jean Bernard},
  volume={1},
  year={2009},
  publisher={World Scientific}
}

@inproceedings{zhai2022deep,
  title={A deep learning method for solving Fokker-Planck equations},
  author={Zhai, Jiayu and Dobson, Matthew and Li, Yao},
  booktitle={Mathematical and Scientific Machine Learning},
  pages={568--597},
  year={2022},
  organization={PMLR}
}

@article{kazeev2014direct,
  title={Direct solution of the chemical master equation using quantized tensor trains},
  author={Kazeev, Vladimir and Khammash, Mustafa and Nip, Michael and Schwab, Christoph},
  journal={PLoS computational biology},
  volume={10},
  number={3},
  pages={e1003359},
  year={2014},
  publisher={Public Library of Science San Francisco, USA}
}

@inproceedings{risteski2016calculate,
  title={How to calculate partition functions using convex programming hierarchies: provable bounds for variational methods},
  author={Risteski, Andrej},
  booktitle={Conference on Learning Theory},
  pages={1402--1416},
  year={2016},
  organization={PMLR}
}

@article{katsura1974bethe,
  title={Bethe lattice and the Bethe approximation},
  author={Katsura, Shigetoshi and Takizawa, Makoto},
  journal={Progress of Theoretical Physics},
  volume={51},
  number={1},
  pages={82--98},
  year={1974},
  publisher={Oxford University Press}
}

@article{funaki1984certain,
  title={A certain class of diffusion processes associated with nonlinear parabolic equations},
  author={Funaki, Tadahisa},
  journal={Zeitschrift f{\"u}r Wahrscheinlichkeitstheorie und Verwandte Gebiete},
  volume={67},
  number={3},
  pages={331--348},
  year={1984},
  publisher={Springer}
}

@article{kikuchi1991metropolis,
  title={Metropolis Monte Carlo method as a numerical technique to solve the Fokker—Planck equation},
  author={Kikuchi, K and Yoshida, M and Maekawa, T and Watanabe, H},
  journal={Chemical Physics Letters},
  volume={185},
  number={3-4},
  pages={335--338},
  year={1991},
  publisher={Elsevier}
}

@article{yu2018deep,
  title={The deep Ritz method: a deep learning-based numerical algorithm for solving variational problems},
  author={Yu, Bing and others},
  journal={Communications in Mathematics and Statistics},
  volume={6},
  number={1},
  pages={1--12},
  year={2018},
  publisher={Springer}
}

@article{chertkov2021solution,
  title={Solution of the Fokker--Planck equation by cross approximation method in the tensor train format},
  author={Chertkov, Andrei and Oseledets, Ivan},
  journal={Frontiers in Artificial Intelligence},
  volume={4},
  pages={668215},
  year={2021},
  publisher={Frontiers Media SA}
}

@article{oseledets2011tensor,
  title={Tensor-train decomposition},
  author={Oseledets, Ivan V},
  journal={SIAM Journal on Scientific Computing},
  volume={33},
  number={5},
  pages={2295--2317},
  year={2011},
  publisher={SIAM}
}

@article{orus2014practical,
  title={A practical introduction to tensor networks: Matrix product states and projected entangled pair states},
  author={Or{\'u}s, Rom{\'a}n},
  journal={Annals of physics},
  volume={349},
  pages={117--158},
  year={2014},
  publisher={Elsevier}
}

@article{chen2023committor,
  title={Committor functions via tensor networks},
  author={Chen, Yian and Hoskins, Jeremy and Khoo, Yuehaw and Lindsey, Michael},
  journal={Journal of Computational Physics},
  volume={472},
  pages={111646},
  year={2023},
  publisher={Elsevier}
}

@book{bhattacharya2009stochastic,
  title={Stochastic processes with applications},
  author={Bhattacharya, Rabi N and Waymire, Edward C},
  year={2009},
  publisher={SIAM}
}

@article{peng2023generative,
  title={Generative Modeling via Hierarchical Tensor Sketching},
  author={Peng, Yifan and Chen, Yian and Stoudenmire, E Miles and Khoo, Yuehaw},
  journal={arXiv preprint arXiv:2304.05305},
  year={2023}
}

@misc{grant2014cvx,
  title={CVX: Matlab software for disciplined convex programming, version 2.1},
  author={Grant, Michael and Boyd, Stephen},
  year={2014}
}

@inproceedings{grant2008graph,
  title={Graph implementations for nonsmooth convex programs},
  author={Grant, Michael C and Boyd, Stephen P},
  booktitle={Recent advances in learning and control},
  pages={95--110},
  year={2008},
  organization={Springer}
}

@article{ambartsumyan2020hierarchical,
  title={Hierarchical matrix approximations of Hessians arising in inverse problems governed by PDEs},
  author={Ambartsumyan, Ilona and Boukaram, Wajih and Bui-Thanh, Tan and Ghattas, Omar and Keyes, David and Stadler, Georg and Turkiyyah, George and Zampini, Stefano},
  journal={SIAM Journal on Scientific Computing},
  volume={42},
  number={5},
  pages={A3397--A3426},
  year={2020},
  publisher={SIAM}
}

@book{GL-model,
  title={Ginzburg-Landau phase transition theory and superconductivity},
  author={Hoffmann, K-H and Tang, Qi},
  volume={134},
  year={2012},
  publisher={Birkh{\"a}user}
}

@book{halliday2013fundamentals,
  title={Fundamentals of physics},
  author={Halliday, David and Resnick, Robert and Walker, Jearl},
  year={2013},
  publisher={John Wiley \& Sons}
}

@article{lennard1924determination,
  title={On the determination of molecular fields.—I. From the variation of the viscosity of a gas with temperature},
  author={Lennard, JE and Jones, I},
  journal={Proceedings of the Royal Society of London. Series A, containing papers of a mathematical and physical character},
  volume={106},
  number={738},
  pages={441--462},
  year={1924},
  publisher={the Royal Society}
}

@book{rohrlich1989paradox,
  title={From paradox to reality: Our basic concepts of the physical world},
  author={Rohrlich, Fritz},
  year={1989},
  publisher={Cambridge University Press}
}

@article{morse1929diatomic,
  title={Diatomic molecules according to the wave mechanics. II. Vibrational levels},
  author={Morse, Philip M},
  journal={Physical review},
  volume={34},
  number={1},
  pages={57},
  year={1929},
  publisher={APS}
}

@book{pathria2016statistical,
  title={Statistical mechanics},
  author={Pathria, Raj Kumar},
  year={2016},
  publisher={Elsevier}
}

@article{blei2017variational,
  title={Variational inference: A review for statisticians},
  author={Blei, David M and Kucukelbir, Alp and McAuliffe, Jon D},
  journal={Journal of the American statistical Association},
  volume={112},
  number={518},
  pages={859--877},
  year={2017},
  publisher={Taylor \& Francis}
}

@article{chen2021scalable,
  title={Scalable Gaussian Process Analysis for Implicit Physics-Based Covariance Models},
  author={Chen, Yian and Anitescu, Mihai},
  journal={International Journal for Uncertainty Quantification},
  volume={11},
  number={6},
  year={2021},
  publisher={Begel House Inc.}
}

@article{chen2023scalable,
  title={Scalable Physics-based Maximum Likelihood Estimation using Hierarchical Matrices},
  author={Chen, Yian and Anitescu, Mihai},
  journal={arXiv preprint arXiv:2303.10102},
  year={2023}
}

@article{wu2019solving,
  title={Solving statistical mechanics using variational autoregressive networks},
  author={Wu, Dian and Wang, Lei and Zhang, Pan},
  journal={Physical review letters},
  volume={122},
  number={8},
  pages={080602},
  year={2019},
  publisher={APS}
}

@book{schlick2010molecular,
  title={Molecular modeling and simulation: an interdisciplinary guide},
  author={Schlick, Tamar},
  volume={2},
  year={2010},
  publisher={Springer}
}

@article{ho2016hierarchical,
  title={Hierarchical interpolative factorization for elliptic operators: differential equations},
  author={Ho, Kenneth L and Ying, Lexing},
  journal={Communications on Pure and Applied Mathematics},
  volume={69},
  number={8},
  pages={1415--1451},
  year={2016},
  publisher={Wiley Online Library}
}

\end{document}